\newcommand{\la}{\lambda}
\newcommand{\CC}{\mathbb{C}}
\newcommand{\PP}{\mathbb{P}}
\newcommand{\RR}{\mathbb{R}}
\newcommand{\ZZ}{\mathbb{Z}}
\newcommand{\mcA}{\mathcal{A}}
\newcommand{\mcC}{\mathcal{C}}
\newcommand{\mcH}{\mathcal{H}}
\newcommand{\mcM}{\mathcal{M}}
\newcommand{\skipitems}[1]{%
	\addtocounter{\@enumctr}{#1}%
}
\newtheorem{thm}{Theorem}
\newtheorem{lemma}[thm]{Lemma}
\newtheorem{conj}[thm]{Conjecture}
\newtheorem{prop}[thm]{Proposition}
\theoremstyle{definition}
\newtheorem{ex}[thm]{Example}
\pgfplotsset{compat=newest}
\newcommand{\ccc}{\mathbf{c}}
\newcommand{\KWP}{(\CC^*)^{2^g} \times \mathbb{W} \PP^{3g-1}}
\newcommand{\HC}{\mathcal{H}_\mathcal{C}}
\newcommand{\uu}{{\bf u}}
\newcommand{\vv}{{\bf v}}
\newcommand{\ww}{{\bf w}}
\newcommand{\HM}{\mathcal{H}_\mathcal{C}^{M}}
\newcommand{\norm}[1]{\left\lVert#1\right\rVert}
\newcommand{\Ig}{\mathfrak{I}_g}
\date{ }
\begin{document}

\begin{frontmatter}
	
\title{Hirota Varieties and Rational Nodal Curves}

\author[1]{Claudia Fevola}
\ead{claudia.fevola@mis.mpg.de}

\author[2]{Yelena Mandelshtam}
\ead{yelenam@berkeley.edu}

\address[1]{MPI-MiS, Leipzig, Germany}
\address[2]{UC Berkeley, United States of America}

\begin{abstract}
	\noindent 
The Hirota variety parameterizes solutions to the KP equation arising from a degenerate Riemann theta function. In this work, we study in detail the Hirota variety arising from a rational nodal curve. Of particular interest is the irreducible subvariety defined as the image of a parameterization map, we call this the main component. Proving that this is an irreducible component of the Hirota variety corresponds to solving a weak Schottky problem for rational nodal curves. We solve this problem up to genus nine using computational tools.
\end{abstract}

\end{frontmatter}

\label{intro}

The \textit{degenerate Riemann theta function} associated to an irreducible rational nodal curve $X$ of genus $g$ is a finite sum of exponentials 

\begin{equation}\label{eq:degenerateTheta}
\theta_{\mathcal{C}}({\bf z}) \,\,=\,\, a_1 \exp[\ccc_1^T {\bf z}] \,\,+\,\, a_2 \exp[\ccc_2^T {\bf z}] \,\,+\,\, \dots \,\,+\,\, a_{2^g} \exp[\ccc_{2^g}^T {\bf z}]
\end{equation}
supported on the configuration of points $\mathcal{C} = \{0,1\}^g$ in the integer lattice $\ZZ^g$. Each point $\ccc_i = (\ccc_{i1},\ccc_{i2},\dots,\ccc_{ig})\in {\cal C}$ determines a linear form $\ccc_i^T {\bf z} = \sum_{j=i}^g c_{ij} z_j$, where ${\bf z}=(z_1,z_2,\dots,z_g)$ is a vector of complex unknowns, while the coefficients ${\bf a}~=~(a_1,a_2,\dots,a_{2^g})$ play the role of coordinates on the algebraic torus $(\CC^*)^{2^g}$.
The classical Riemann theta function associated to a smooth algebraic curve of genus $g$ is a complex analytic function expressed by an infinite sum of exponentials supported on the whole integer lattice, namely 
\begin{equation}\label{eq:thetaFunction}
\theta \,\,=\,\, \theta({\bf z}\,|\, B)\,\, =\,\,\sum_{{\bf c}\in \ZZ^g} \exp\left[ \frac{1}{2} \ccc^T B \ccc + \ccc^T {\bf z}\right],
\end{equation}
where $B$ is the \textit{Riemann matrix} associated to the curve. This is a symmetric $g\times g$ matrix that we normalize to have negative definite real part. 
The analogue of this matrix for a rational nodal curve is given by the \textit{tropical Riemann matrix}. To compute it, one considers the dual graph $\Gamma$ \cite{chan2017lectures} corresponding to $X$ and associates a matrix to it by looking at the generators of the first homology group of the graph $\Gamma$. The dual graph of the curve $X$ consists of one unique vertex with precisely $g$ cycles attached to it, one for each node, and the tropical Riemann matrix is the identity matrix of size $g$.

The \textit{Hirota variety} $\HC$ associated to the curve $X$, first introduced in \cite{AFMS} for curves with at worst nodal singularities, lives in the parameter space given by the product of the algebraic torus $(\CC^*)^{2^g}$ with coordinates ${\bf a} = (a_1,\dots,a_{2^g})$, and the weighted projective space $\mathbb{W}\PP^{3g-1}$ whose coordinates are the vectors ${\bf u}  = (u_1,\dots,u_g)$, ${\bf v}  = (v_1,\dots,v_g)$, and ${\bf w }  = (w_1,\dots,w_g)$ with weights $1$,$2$, and $3$ for $u_i, v_i$, and $w_i$, respectively. It parameterizes the points ${({\bf a},({\bf u},{\bf v},{\bf w})) \in (\CC^*)^{2^g}\times\mathbb{W}\PP^{3g-1}}$ such that the $\tau$-function $\tau(x,y,t) = \theta_{\mcC}(\uu x+\vv y +\ww t)$ satisfies the \textit{Hirota's bilinear equation}
\begin{equation}\label{eq:hirotabilinearform}
\tau \tau_{xxxx}\,\,-\,\,4\tau_{xxx}\tau_x\,\, +\,\, 3\tau_{xx}^2\,\,+\,\,4\tau_x\tau_t \,\,-\,\,4\tau\tau_{xt}\,\,+\,\,3\tau \tau_{yy}\,\, -\,\, 3\tau_y^2 \,\,=\,\, 0.
\end{equation}
This provides a sufficient condition for the function 
\begin{equation}\label{eq:solutions}
p(x,y,t)\,= \,2\frac{\partial^2}{\partial x^2} \log \tau (x,y,t)    
\end{equation}
to be a solution for the \textit{Kadomtsev-Petviashvili (KP) equation}. This is a nonlinear partial differential equation describing the motion of water waves given by
\begin{equation}\label{eq:KP}
\frac{\partial}{\partial x} (4 p_t - 6pp_x -p_{xxx})\,=\,3p_{yy},
\end{equation}
where the unknown function $p = p(x,y,t)$ depends on two space variables $x$ and $y$, and on the time variable $t$. Furthermore, points in the Hirota variety $\HC$ correspond to a special class of solutions to the KP equation, namely \textit{soliton} solutions, \cite{AbeGri2018, Kodama_book}.

These are solutions of type \eqref{eq:solutions} with the $\tau$-function having a particular form. To define it, one fixes integers $k<n$ and a vector of parameters $\kappa=(\kappa_1,\dots,\kappa_n)\in \RR^n$, then
\begin{equation}\label{eq:tauSoliton}
\tau(x,y,t) \,\,=\,\, \sum_{I\in\binom{[n]}{k}}p_I\prod_{\substack{i,j\in I\\i<j}} (\kappa_i-\kappa_j) \cdot\exp\biggl[ x\cdot\sum_{i\in I}\kappa_i + y\cdot\sum_{i\in I}\kappa_i^2 + t\cdot\sum_{i\in I}\kappa_i^3\biggr],
\end{equation}
where $I$ runs over all possible $k$-tuples of elements in the set $[n] = \{1,2,\dots,n\}$, and the $p_I$ are Plücker coordinates of a point in the Grassmannian Gr$(k,n)$, \cite{Kodama_book,sato1981soliton}. We call \eqref{eq:tauSoliton} a $(k,n)$-soliton.

Finding solutions to the KP equation is related to the \emph{Schottky problem}, in that a theta function satisfies the KP equation when the corresponding abelian variety is the Jacobian of a curve \cite{krichever2013soliton,shiota1986characterization}. More precisely, it concerns the Torelli map $J: \mathcal{M}_g \to \mathcal{A}_g$, where $\mathcal{M}_g$ is the moduli space of curves of genus $g$, and $\mathcal{A}_g$ is the moduli space of abelian varieties of dimension $g$. The Schottky problem is to find the defining equations for the locus of Jacobians, defined as the closure of $J(\mathcal{M}_g)$ in $\mathcal{A}_g$. 
The \emph{weak Schottky problem} is to find an ideal whose zero locus contains the locus of Jacobians as an irreducible component. In genus 3, the Schottky problem is trivial, in the sense that every polarized abelian variety is the Jacobian of a curve. For genus $g\geq4$, one has a proper inclusion $J(\mathcal{M}_g)\subset\mathcal{A}_g$. For genus larger than $5$, the Schottky problem has proven to be difficult, with only partial results for genus 5 and weak solutions in higher genera. Many approaches to the Schottky problem have been developed (e.g., \cite{arbarello1984set,igusa1982irreducibility}), and an explicit solution to the weak Schottky problem was provided in \cite{farkas2021explicit} and investigated numerically for genus 5 in \cite{agostini2021computing}. The proof of Theorem $\ref{thm:irredcomp}$ in our paper relies on a solution to the weak Schottky problem for nodal curves, which we provide for $g \leq 9$.

This article is organized as follows. In Section \ref{sec:maincomp}, we introduce the Hirota variety of a rational nodal curve and discuss its main component. We prove for genus $g\leq 9$ that this is an irreducible component of dimension $3g$ and we explain how this relates to the Schottky problem. Section 
\ref{sec:combinatorics} studies the equations of the main component of the variety $\HC$ and how they relate to the combinatorics of the cube. 

\section{The Main Component of the Hirota Variety}\label{sec:maincomp}
Let $X$ be a rational nodal curve of genus $g$. The degenerate theta function \eqref{eq:degenerateTheta} associated to $X$ is supported on the vertices of the $g$-dimensional cube $\mcC$ whose vertices are all binary $g$-dimensional vectors. The Hirota variety $\HC$ lives in the space $(\CC^*)^{2^g}\times\mathbb{W}\PP^{3g-1}$ with coordinate ring $\CC[{\bf a}^{\pm 1}, {\bf u}, {\bf v}, {\bf w}]$, where $\deg(u_i)=1,\deg(v_i)=2,$ and $\deg(w_i)=3$ for $i=1,2,\dots,g$. 

We want to investigate the subvariety denoted by $\HC^{I}$ of the Hirota variety $\mathcal{H}_\mathcal{C}$, where the superscript $I$ stands for `invertible'. We define $\HC^I$ to be the Zariski closure of the set 
\begin{equation}
\bigl\{ \, (\mathbf{a, (u, v, w)}) \in \HC \,\,:\,\, \bf{u}\neq \bf{0}\, \bigr\}.
\end{equation}
This subvariety also contains a meaningful irreducible subvariety, which we call the \textit{main component}. 
To rigorously define it, consider the map from the affine space $\CC^{3g}$ with coordinates
$(\lambda_1,\dots,\lambda_g,\kappa_1,\kappa_2,\dots, \kappa_{2g})$ into the ambient space of the Hirota variety $\HC$ given by 
\begin{align}\label{eq:paramMap}
\phi \;:\; \CC^{3g} \quad & \dashrightarrow \,\KWP\\
(\lambda_1,
\dots,\lambda_g,
\kappa_1,\kappa_2,
\dots,\kappa_{2g}) & \longmapsto
\,(a_{\ccc_1}, a_{\ccc_2},
\dots,a_{\ccc_{2^g}},
{\bf u}, {\bf v}, {\bf w}),\nonumber
\end{align}
where the coordinates ${\bf a} = (a_{\ccc_1}, a_{\ccc_2},
\dots,a_{\ccc_{2^g}})$ are indexed by the points in $\mcC=\{0,1\}^g$. The image of $\phi$ is defined as follows
\begin{equation}\label{eq:param}
\begin{matrix} 
u_i = \kappa_{2i-1}-\kappa_{2i},\quad
v_i = \kappa_{2i-1}^2- \kappa_{2i}^2, \quad
w_i = \kappa_{2i-1}^3-\kappa_{2i}^3 \qquad \text{for } i=1,2,\dots, g, \bigskip\\
a_{\mathbf{c}} = \displaystyle\prod_{\substack{i,j\in I\\i<j}} (\kappa_i - \kappa_j) \displaystyle\prod_{\{i\,:\, c_i = 1\}} \lambda_i, \, \quad \text{ where } I = \{2i: c_i = 0\} \cup \{2i-1: c_i = 1\} \quad \text{for } \mathbf{c}\in \mathcal{C}. 
\end{matrix}
\end{equation}
We call the closure of the image of $\phi$ the \emph{main component of $\HC$} and denote it $\HM$.

In the first part of this section, we will explain the geometric intuition that leads us to the definition of the main component $\HM$ by way of the \textit{Abel map} for curves and the \textit{theta divisor}, see \cite{arbarello1985geometry}. In the second part, we will focus on the study of the main component and its connection with the weak and classical Schottky problems.  

\subsection{The Theta Divisor of a Rational Nodal Curve}
Let $X$ be a curve as above and denote by $n_1,n_2,\dots,n_g$ its nodes. The normalization ${\nu:\tilde{X}\to X}$ that separates the $g$ nodes of $X$ is given by a projective line. We can consider $\kappa_1,\kappa_2,\dots,\kappa_{2g}$ to be points on $\PP^1$ and set $\nu^{-1}(n_i):=\{\kappa_{2i-1},\kappa_{2i}\}$. Hence, each rational curve with only nodal singularities corresponds to a copy of $\PP^1$ and $2g$ points on it. This fact motivates that the moduli space of rational nodal curves has dimension $2g-3$, where one subtracts $3$ to account for the dimension of the automorphism group of $\PP^1$. 
A basis of holomorphic differentials for such curves is given by $$\omega_{i}\,\, =\,\, \frac{1}{y} \left(\frac{1}{1-\kappa_{2i}y} - \frac{1}{1-\kappa_{2i-1}y}\right)dy \qquad \text{for } i = 1,2,\dots,g,$$
when fixing $y = 1/x$ as local coordinate. These define a map $\alpha': (\PP^1)^{g-1} \dashrightarrow \CC^{g}$ such that 
$$(y_1, \dots, y_{g-1}) \longmapsto \left(\, \sum_{i=1}^{g-1} \int_0^{y_i} \omega_j\right)_{j=1,2,\dots,g} \quad \text{where}\quad \int_0^{y_i} \omega_j \,=\, \log\left(\frac{1-\kappa_{2j-1}y_i}{1 - \kappa_{2j}y_i}\right).
$$
The (generalized) Jacobian of the curve $X$ is an algebraic torus $(\CC^*)^g$. Exponentiation allows to map in the Jacobian through the map $\CC^g \to (\CC^*)^g$ given by  ($(z_1, \dots, z_g) \mapsto (\exp[z_1], \dots, \exp[z_g])$). The composition gives the Abel map $\alpha:(\PP^1)^{g-1} \dashrightarrow (\CC^*)^g$ defined by
\begin{equation}\label{eq:abelMap}
(y_1, \dots, y_{g-1}) \longmapsto \left( \,\prod_{i=1}^{g-1} \frac{1-\kappa_1y_i}{1-\kappa_2y_i}, \,\,\prod_{i=1}^{g-1} \frac{1-\kappa_3y_i}{1-\kappa_4y_i} ,\dots, \prod_{i=1}^{g-1} \frac{1-\kappa_{2g-1}y_i}{1-\kappa_{2g}y_i}\,\right).
\end{equation}
The theta divisor of $X$ is the image of the Abel map $\alpha$ up to translation.
We will motivate later that we expect each point in the main component of the Hirota variety $\HM$ to correspond (non-injectively) to the choice of a curve in this moduli space and a theta divisor. Following this reasoning, the projection of $\HM$ into the space $(\CC^*)^{2^g}$ has dimension $3g-3$, accounting for the choice of a rational nodal curve and its theta divisor. For each point in this projection, the fiber is a threefold (analogous to the Dubrovin threefold studied in \cite{agostini2021dubrovin}) given by 
\begin{equation*}
\{ \,(\mathbf{u, v, w}) \in \mathbb{W}\PP^{3g-1} \,\,:\,\, \tau(x,y,t)=\theta_{\mcC}(\mathbf{u }x + {\bf v}y + {\bf w}t) \text{ solves } \eqref{eq:hirotabilinearform}\, \}.
\end{equation*}
Thus, the expected dimension of the main component is $2g-3 +g + 3 = 3g$.
This discussion also provides a way to parameterize the main component of the Hirota variety. The idea is that the choice of the curve $X$ yields $2g$ parameters $\kappa_1,\kappa_2,\dots,\kappa_{2g}$, and the family of theta divisors corresponding to shifts of the unknown vector ${\bf z}\in\CC^g$ yields $g$ parameters $\la_1, \dots, \la_g$.
In particular, we are able to compute, using \texttt{Macaulay2}, the theta divisor as a shift (through a change of variables) of the image of the map \eqref{eq:abelMap} up to genus $6$. This is done following the method described in \cite[Corollary 4.8]{michalek2021invitation}. The equation of the theta divisor returned by the code coincides precisely with \eqref{eq:degenerateTheta2} with coefficients ${\bf a} = (a_1,a_2,\dots,a_{2g})$ parameterized in a way which can be shown, through a change of coordinates and some calculations, to be geometrically equivalent to (\ref{eq:param}), up to the $\lambda$ parameters. More precisely, this justifies that the parameterization in \eqref{eq:paramMap} is not unique. The torus action on the theta divisor provides new suitable parameterizations. The approach for computing the theta divisor was inspired by \cite{agostini2021algebraic}. A construction for the theta divisor of a singular curve is also discussed in \cite[Chapter IIIb]{mumford2007tata}.

To understand how the $\lambda$ parameters arise, we consider the Riemann theta function \eqref{eq:degenerateTheta} evaluated at a point ${\bf z} \in \CC^g$ shifted by a vector ${\bf h} = (h_1,h_2,\dots,h_g) \in \mathbb{C}^g$, namely
$$\theta_{\mathcal{C}}(\mathbf{z+h}) = \sum_{\mathbf{c} \in \mcC} a_\mathbf{c} \exp[\mathbf{c}^T\mathbf{h}]\exp[\mathbf{c}^T\mathbf{z}]$$ Both functions $\theta_{\mathcal{C}}(\mathbf{z})$ and $\theta_{\mathcal{C}}(\mathbf{z+h})$ provide solutions to the KP~equation (with nonzero $u_i$), hence they correspond to points in $\HM$. More explicitly, denote $\la_i:=\exp[h_i]$, we see that for a point $(\mathbf{a, (u, v, w)}) \in \HM$, the point $(\mathbf{\tilde{a},(u, v, w)}) \in \HM$, where $\mathbf{\tilde{a}} = (a_{\mathbf{c}}\exp[\mathbf{c}^T\mathbf{h}])= (a_{\mathbf{c}} \prod_{\{i\,:\, c_i = 1\}}\la_i)$. Thus, we conclude that the choice of the theta divisor is exactly represented by the parameterizing variables $\la_1, \dots, \la_g$.

\begin{ex}\label{ex:cube}(g=3)
	This example is intended to clarify the role of the parameters in the case of the $3$-cube. For the $\kappa$ parameters, as in the proof above, we fix $\kappa_1, \dots, \kappa_6 \in \PP^1$. The differentials $\omega_1, \omega_2, \omega_3$, after the coordinate change $y=1/x$, are given by 
	$$
	\omega_1 \,\,=\,\, \frac{1}{y}\left(\frac{1}{1-\kappa_2y}-\frac{1}{1-\kappa_1y} \right)dy,\qquad
	\omega_2 \,\,=\,\, \frac{1}{y}\left(\frac{1}{1-\kappa_4y}-\frac{1}{1-\kappa_3y} \right)dy,\qquad
	$$
	$$  \omega_3 \,\,=\,\, \frac{1}{y}\left(\frac{1}{1-\kappa_6y}-\frac{1}{1-\kappa_5y} \right)dy.$$
	Therefore, the Abel map $\alpha: (\PP^1)^2 \dashrightarrow (\CC^*)^3$ is defined by 
	$$(y_1, y_2) \mapsto \left( \left( \frac{1-\kappa_1y_1}{1-\kappa_2y_1}\right) \cdot \left( \frac{1-\kappa_1y_2}{1-\kappa_2y_2}\right), \left( \frac{1-\kappa_3y_1}{1-\kappa_4y_1}\right) \cdot \left( \frac{1-\kappa_3y_2}{1-\kappa_4y_2}\right), \left( \frac{1-\kappa_5y_1}{1-\kappa_6y_1}\right) \cdot \left( \frac{1-\kappa_5y_2}{1-\kappa_6y_2}\right)\right).$$
	One can find the implicitizing equation cutting out the image of this map in \texttt{Macaulay2} with the~code \begin{verbatim}
	I = ideal(q1*(1-k2*y1)*(1-k2*y2)-(1-k1*y1)*(1-k1*y2),
	          q2*(1-k4*y1)*(1-k4*y2)-(1-k3*y1)*(1-k3*y2),
	          q3*(1-k6*y1)*(1-k6*y2)-(1-k5*y1)*(1-k5*y2));
	J = eliminate(I,{y1,y2})
	\end{verbatim}
	The resulting equation gives exactly the familiar theta function for $g=3$, with the $a_\mathbf{c}$ parameterized by the $\kappa_i$'s.
	For the $\la$ parameters,
	we consider the theta functions
	\begin{align}\label{eq:cubetheta}\theta_{\mathcal{C}}(\mathbf{z}) = a_{000} &+ a_{100}\exp[z_1] + a_{010}\exp[z_2] + a_{001}\exp[z_3] + a_{110}\exp[z_1 + z_2]\\ & + a_{101}\exp[z_1 + z_3]+ a_{011}\exp[z_2 + z_3] + a_{111}\exp[z_1 + z_2 + z_3]\nonumber\end{align} and \begin{align*}\theta_{\mathcal{C}}({\bf z + h}) = a_{000} &+ a_{100}\exp[h_1]\exp[z_1] + a_{010}\exp[h_2]\exp[z_2] + a_{001}\exp[h_3]\exp[z_3]\\ &+ a_{110}\exp[h_1 + h_2]\exp[z_1 + z_2]+ a_{101}\exp[h_1 + h_3]\exp[z_1 + z_3] \\&+ a_{011}\exp[h_2+h_3]\exp[z_2 + z_3] + a_{111}\exp[h_1 + h_2 + h_3]\exp[z_1 + z_2 + z_3].\end{align*}
	Letting $ \lambda_i:=\exp[h_i]$, we have 
	\begin{align*}
	\theta_{\mathcal{C}}({\bf z + h}) = a_{000} &+ \la_1a_{100}\exp[z_1] + \la_2a_{010}\exp[z_2] + \la_3a_{001}\exp[z_3] + \la_1\la_2a_{110}\exp[z_1 + z_2] \\ &+ \la_1\la_3a_{101}\exp[z_1 + z_3]+ \la_2\la_3a_{011}\exp[z_2 + z_3] + \la_1\la_2\la_3a_{111}\exp[z_1 + z_2 + z_3].\end{align*}
	This gives us $g$ parameterizing factors $\la_i$ with $i = 1,2,\dots, g$ for the variables $a_{\bf c}$. 
\end{ex}
The code used in Example \ref{ex:cube} is available at the repository website \texttt{MathRepo} \cite{fevola2022mathematical} of MPI-MiS via the link 
\begin{equation}\label{eq:mathrepo}
\texttt{\href{https://mathrepo.mis.mpg.de/HirotaVarietyRationalNodalCurve}{https://mathrepo.mis.mpg.de/HirotaVarietyRationalNodalCurve}}.
\end{equation}

\subsection{Geometry of the Main Component}
The following result justifies the definition of the main component $\HM$ and provides a connection with soliton solutions to the KP equation.

\begin{thm}\label{thm:imagecontained}
	Consider the map $\phi$ given in (\ref{eq:paramMap}). This is a birational map onto its image, which is an irreducible subvariety of $\HM$ and has dimension $3g$. 
\end{thm}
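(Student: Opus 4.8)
The plan is to establish the statement in three stages: that $\phi(\CC^{3g})$ is contained in the Hirota variety $\HC$, indeed in $\HC^{I}$, so that $\HM=\overline{\phi(\CC^{3g})}$ is a genuine irreducible subvariety of $\HC$; that $\phi$ is generically injective, hence birational onto its image; and the dimension count. The heart of the matter is the first stage, and the idea is to recognize the $\tau$-function produced by $\phi$ as a $(g,2g)$-soliton. For $\ccc\in\mcC$ write $I(\ccc)=\{2i:c_i=0\}\cup\{2i-1:c_i=1\}$ for the $g$-element subset of $[2g]$ occurring in \eqref{eq:param}; as $\ccc$ ranges over $\{0,1\}^{g}$ these are precisely the $g$-subsets of $[2g]$ meeting each pair $\{2i-1,2i\}$ in a single element. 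The first thing I would record is the identities $\sum_{i\in I(\ccc)}\kappa_i=\ccc^{T}\uu+\sum_{i=1}^{g}\kappa_{2i}$, and similarly $\sum_{i\in I(\ccc)}\kappa_i^{2}=\ccc^{T}\vv+\sum_{i=1}^{g}\kappa_{2i}^{2}$ and $\sum_{i\in I(\ccc)}\kappa_i^{3}=\ccc^{T}\ww+\sum_{i=1}^{g}\kappa_{2i}^{3}$, all immediate from \eqref{eq:param}. Substituting these into \eqref{eq:degenerateTheta} and pulling out the $\ccc$-independent exponential $\exp[-(\sum_i\kappa_{2i})x-(\sum_i\kappa_{2i}^{2})y-(\sum_i\kappa_{2i}^{3})t]$, the $\tau$-function $\tau(x,y,t)=\theta_{\mcC}(\uu x+\vv y+\ww t)$ becomes, up to this exponential gauge, a sum of exactly the shape \eqref{eq:tauSoliton} for $(k,n)=(g,2g)$, with Plücker coordinates $p_{I(\ccc)}=\prod_{\{i\,:\,c_i=1\}}\la_i$ and $p_I=0$ for every other $g$-subset $I\subseteq[2g]$.

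Next I would check that this vector of Plücker coordinates really defines a point of $\mathrm{Gr}(g,2g)$: it is the row span of the $g\times 2g$ matrix whose $i$-th row has $\la_i$ in column $2i-1$, a $1$ in column $2i$, and zeros elsewhere, whose maximal minor on a column set $I$ vanishes unless $I$ meets each pair $\{2i-1,2i\}$ exactly once and equals $\prod_{\{i\,:\,c_i=1\}}\la_i$ when $I=I(\ccc)$. Since \eqref{eq:hirotabilinearform} in Hirota-derivative form reads $(D_x^{4}-4D_xD_t+3D_y^{2})\,\tau\cdot\tau=0$ and is invariant under multiplying $\tau$ by the exponential of a linear form in $(x,y,t)$, and since every $(k,n)$-soliton $\tau$-function satisfies \eqref{eq:hirotabilinearform} by Sato theory \cite{Kodama_book,sato1981soliton}, the $\tau$-function attached to $\phi(\la,\kappa)$ satisfies \eqref{eq:hirotabilinearform}; hence $\phi(\CC^{3g})\subseteq\HC$, and because $u_i=\kappa_{2i-1}-\kappa_{2i}\neq 0$ for generic $\kappa$ in fact $\phi(\CC^{3g})\subseteq\HC^{I}$. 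Since $\CC^{3g}$ is irreducible, so is its image and therefore so is $\HM$.

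For the second stage I would write down a rational inverse of $\phi$ explicitly. The weighted-projective coordinates give $\kappa_{2i-1}-\kappa_{2i}=u_i$ and $\kappa_{2i-1}+\kappa_{2i}=v_i/u_i$, hence all $\kappa_j$; the one point requiring attention is the overall scaling $\kappa\mapsto\mu\kappa$ inherent in $\mathbb{W}\PP^{3g-1}$, which is pinned down once one also uses the torus coordinate $a_{\mathbf 0}=\prod_{i<j\in I(\mathbf 0)}(\kappa_i-\kappa_j)$. Then $\la_k=a_{e_k}\big/\prod_{i<j\in I(e_k)}(\kappa_i-\kappa_j)$ recovers $\la_k$ from the coordinate indexed by the $k$-th standard basis vector $e_k$, the denominators being nonzero for generic $\kappa$; this proves $\phi$ birational onto its image. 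The third stage is then formal: a dominant, generically injective morphism from the irreducible variety $\CC^{3g}$ of dimension $3g$ onto $\HM=\overline{\phi(\CC^{3g})}$ forces $\dim\HM=3g$. I expect the first stage to be the main obstacle — correctly identifying the substituted theta function with a soliton $\tau$-function, and in particular verifying that the prescribed Plücker vector lies on the Grassmannian — while the only other delicate point is the bookkeeping of the weighted-projective scaling when inverting $\phi$.
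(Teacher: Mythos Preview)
Your proposal is essentially identical to the paper's proof: both recognize the $\tau$-function produced by $\phi$ as a $(g,2g)$-soliton attached to the $g\times 2g$ matrix whose $i$-th row is $(\ldots,\lambda_i,1,\ldots)$, use that the exponential prefactor $(E_2E_4\cdots E_{2g})^{-1}$ is harmless for Hirota's bilinear equation, and then invert $\phi$ by solving $u_i=\kappa_{2i-1}-\kappa_{2i}$, $v_i/u_i=\kappa_{2i-1}+\kappa_{2i}$ and recovering the $\lambda_i$ from the $a$-coordinates. You are somewhat more explicit than the paper in checking that the prescribed Pl\"ucker vector actually lies on $\mathrm{Gr}(g,2g)$ and in flagging the weighted-projective scaling ambiguity, but the structure and key ideas coincide.
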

\begin{proof}
	Let $I$ be as in (\ref{eq:param}). Let $K \subseteq \CC^{3g}$ be the closed set where at least two of the $\kappa_i$ coincide. The expression of $\tau(x,y,t)=\theta_\mathcal{C}( {\bf u} x + {\bf v} y + {\bf w} t )$ where $({\bf a},({\bf u},{\bf v}, {\bf w}))$ is attained as the image of a point in $\CC^{3g}\setminus K$ through the map $\phi$, described in \eqref{eq:paramMap}, is a point in the Hirota variety $\mcH_\mcC$ since 
	it can be expressed as a $(g,2g)$-soliton \cite{kodama2004young} for the matrix
	\begin{equation}\label{eq:matrixSoliton}
	A \,=\, \begin{pmatrix}
	\lambda_1 & 1 & 0 & 0 & 0 & 0 & \dots & 0 & 0 \\
	0 & 0 & \lambda_2 & 1 & 0 & 0 & \dots & 0 & 0\\
	0 & 0 & 0 & 0 & \lambda_3 & 1 & \dots & 0 & 0\\
	\vdots & \vdots & \vdots & \vdots & \vdots & \vdots & \ddots & \vdots & \vdots \\
	0 & 0 & 0 & 0 & 0 & 0 & \dots & \lambda_g & 1\\
	\end{pmatrix}.
	\end{equation}
	
	Indeed, if we denote $E_i:=\exp (\kappa_i x + \kappa_i^2 y + \kappa_i^3 t)$, plugging the parameterization in $\theta_{\mathcal{C}}( {\bf u} x + {\bf v} y + {\bf w} t)$ we obtain:
	
	\begin{equation}\label{eq:g2gsoliton}
	\tilde{\tau}_{\mathcal{C}}(x,y,t) \,=\, \frac{1}{E_2E_4\cdots E_{2g}}\biggl(\displaystyle\sum_{I} \displaystyle\prod_{i< j \in I} (\kappa_i - \kappa_j) \displaystyle\prod_{\{i\,:\, c_i = 1\}} \lambda_i \prod_{i\in I} E_i \biggr),
	\end{equation}
where the sets $I$ defining the sum are the same ones defined in (\ref{eq:param}). The extraneous exponential factor $(E_2E_4\cdots E_{2g})^{-1}$ disappears after
	we pass from $\tilde \tau(x,y,t)$ to $\partial_x^2 \, {\rm log}(\tilde \tau(x,y,t))$. Both
	versions of the $(g,2g)$-soliton satisfy the Hirota's bilinear form and they represent the same solution to the KP equation. Hence, it follows that the image of $\CC^{3g}$ through the map $\phi$ is contained in $\HC$.
	
	Furthermore, the map $\phi$ is invertible outside the closed set where the $u_i$'s vanish: given a point $(\mathbf{a, (u, v, w)})$ in the image, one can write 
	\begin{equation}\label{eq:invertedKappas}
	\kappa_{2i-1} \,=\, \frac{u_i^2 + v_i}{2u_i} \qquad \text{and} \qquad \kappa_{2i} \,=\, \frac{v_i - u_i^2}{2u_i},
	\end{equation}
	and the $\lambda_i$'s can be obtained sequentially, starting from $\lambda_1$, by plugging in the values for the $\kappa$ variables into the $a_i$'s. Hence, we can conclude that the map $\phi$ is birational. This implies that the closure of the image is irreducible and has dimension $3g$.
\end{proof}
Notice that the method above gives a way to parameterize solutions arising from a genus $g$ rational nodal curve as $(g,2g)$-solitons. This is also consistent with \cite[Example 29]{AFMS}.

In what follows, we show that $\HM$ is an irreducible component of $\HC$ whose points correspond to genus $g$ rational nodal curves. This is equivalent to solving a version of the \emph{weak Schottky problem}. In fact, $\HM$ parameterizes some solutions to the KP equation arising from irreducible rational nodal curves of genus $g$, and hence corresponds to a variety containing the locus of Jacobians of such curves as an irreducible component.  

\begin{thm}\label{thm:irredcomp}
	For genus $g\leq 9$, the subvariety $\HM$ is an irreducible component of the Hirota variety. 
\end{thm}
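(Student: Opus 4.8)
The plan is to establish the statement, for each fixed $g\le 9$, by a single tangent-space computation at a generic point of $\HM$. By Theorem~\ref{thm:imagecontained} we already know that $\HM$ is an irreducible subvariety of $\HC$ of dimension $3g$, so it remains only to show that no strictly larger irreducible subvariety of $\HC$ passes through a generic point of $\HM$. Fix $p=\phi(\lambda,\kappa)$ for a sufficiently general parameter vector, so that $p$ is a generic point of $\HM$ and lies in the torus $(\CC^*)^{2^g}$. Since $\HM$ is irreducible of dimension $3g$ and $\HM\subseteq\HC$, some irreducible component $Z$ of $\HC$ contains $\HM$, and then $3g=\dim\HM\le\dim Z\le\dim_p\HC\le\dim T_p\HC$. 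Hence it suffices to prove the reverse inequality $\dim T_p\HC\le 3g$: this forces $\dim_p\HC=3g$, whence $\dim Z=3g=\dim\HM$ and, both being irreducible with $\HM\subseteq Z$, we get $Z=\HM$, i.e.\ $\HM$ is an irreducible component.

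To bound the tangent space I would write out Hirota's bilinear equation \eqref{eq:hirotabilinearform} for $\tau(x,y,t)=\theta_{\mcC}(\uu x+\vv y+\ww t)=\sum_{\mathbf{c}\in\mcC}a_{\mathbf{c}}\exp[\mathbf{c}^T(\uu x+\vv y+\ww t)]$, and collect, for each frequency $\mathbf{d}\in\{0,1,2\}^g$, the coefficient of $\exp[\mathbf{d}^T(\uu x+\vv y+\ww t)]$. These coefficients are explicit elements of $\CC[\mathbf{a},\uu,\vv,\ww]$; they generate an ideal $J$ with $\HC\subseteq V(J)$, and since $\HC$ is a closed (reduced) subscheme of $V(J)$ we have $T_p\HC\subseteq T_pV(J)$. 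Working in the affine chart $u_1=1$ of $\mathbb{W}\PP^{3g-1}$, which contains $p$ since $u_1(p)=\kappa_1-\kappa_2\neq 0$ for generic parameters, I would evaluate the Jacobian of these generators at $p$ and check that its rank equals $2^g-1$. As the ambient space has dimension $2^g+3g-1$, this yields $\dim T_pV(J)=3g$ and hence $\dim T_p\HC\le 3g$, completing the chain above. Note that the opposite bound $\operatorname{rank}\le 2^g-1$ is automatic from $\dim T_p\HC\ge 3g$, so it is enough to exhibit the rank lower bound; in particular one may replace $J$ by any convenient subset of its generators that already attains full rank.

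For computational feasibility the key point is to keep all $\lambda$'s and $\kappa$'s symbolic only at the level of the parameterization, then specialize: choose random values of $(\lambda,\kappa)$ over a finite field $\FF_q$, assemble the Jacobian (a matrix of size roughly $3^g\times(2^g+3g)$) over $\FF_q$, and row-reduce it there. By semicontinuity of matrix rank, a rank of $2^g-1$ obtained at a specialized point over $\FF_q$ gives rank $\ge 2^g-1$ at the corresponding generic point in characteristic $0$, which together with the automatic upper bound pins the generic rank at $2^g-1$. Repeating for a few independent random specializations rules out an unlucky choice. This reduces the theorem, for each $g$, to one finite-field linear-algebra computation, which is what the accompanying scripts carry out; conceptually it is exactly a solution of the weak Schottky problem for rational nodal curves of that genus.

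I expect the main obstacle to be sheer computational scale at $g=8,9$: there are $2^g$ unknowns $a_{\mathbf{c}}$ and on the order of $3^g$ Hirota coefficients, so generating and storing these polynomials, evaluating the resulting Jacobian, and computing its rank all require care — working over a small finite field, exploiting the product structure $a_{\mathbf{c}}=\prod_{i<j\in I}(\kappa_i-\kappa_j)\prod_{c_i=1}\lambda_i$ and the sparsity of the equations, and trimming to a subset of $\gtrsim 2^g-1$ equations that already has full rank. A secondary point demanding bookkeeping is the interplay of the two gradings — the degree-$2$ homogeneity of \eqref{eq:hirotabilinearform} in $\mathbf{a}$ and the weighted grading on $(\uu,\vv,\ww)$ — which is precisely why one fixes an explicit affine chart and checks the numerology $\dim(\text{ambient})-\operatorname{rank}=2^g+3g-1-(2^g-1)=3g$ directly.
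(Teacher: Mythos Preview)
Your proposal is correct and follows essentially the same approach as the paper: both arguments reduce the statement to verifying that the Jacobian of the defining equations of $\HC$, evaluated at a generic point $p=\phi(\lambda,\kappa)$ of $\HM$, has rank equal to $(\text{ambient dimension})-3g=2^g-1$, so that $\dim T_p\HC=3g=\dim\HM$ forces $\HM$ to be a full irreducible component. The only noteworthy difference is on the computational side: you propose specializing to random points over a finite field and invoking semicontinuity, while the paper assembles the generators of $\HC$ via the combinatorial description of $\mathcal{C}^{[2]}$ from \cite[Section~3]{AFMS} (rather than expanding Hirota's bilinear form directly), noting that the direct expansion becomes prohibitive beyond $g=7$.
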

\begin{proof}
	The proof is mainly computational. A direct computation performed in \texttt{Macaulay2} shows that the Jacobian matrix of the Hirota variety $\HC$ evaluated at the image of  a general point in $\CC^{3g}$ through $\phi$ has rank $r-(3g)$, where $r$ is the dimension of the space $(\CC^*)^{2^g}\times\mathbb{W}\PP^{3g-1}$. Therefore, the map $\phi$ is dominant into the main component $\HM$.
\end{proof}
The code used for the proof above can be found at \eqref{eq:mathrepo}.
The rapid increase in the number of variables ${\bf a}= (a_{\ccc_1},a_{\ccc_2},\dots,a_{\ccc_{2^g}})$ makes the computation difficult for higher genera. One possible way to compute the ideal defining the variety $\HC$ is to use the condition provided by the Hirota's bilinear form \eqref{eq:hirotabilinearform}. However, such a computation becomes expensive when the genus is larger than $7$. To avoid this computation, we implement the equations cutting out $\HC$ via the combinatorial description provided in \cite[Section 3]{AFMS}. 

The parameterization for the main component of the Hirota variety associated to the $3$-dimensional cube was already developed in \cite[Example 8]{AFMS}, although the definition of the main component was not clearly stated yet. We revisit this example, working out the details providing the intuition behind Theorem \ref{thm:imagecontained}. 
\begin{ex}[$g=3$]
	The $3$-cube associated to the dual graph of a rational nodal quartic is the support of the degenerate theta function \eqref{eq:cubetheta}.
	The Hirota variety $\HC$ in $(\CC^*)^8 \times \mathbb{WP}^8$ is cut out by the $19$ polynomial equations displayed in \cite[Example 8]{AFMS}. A direct computation shows that the ideal defining  $\HC$ has five associated primes, hence the Hirota variety has five irreducible
	components in $(\CC^*)^8 \times \mathbb{W} \PP^8$.
	If we restrict to the main component, we can observe that the additional quartic relation
	\begin{equation}\label{eq:THEquartic}
	a_{000} a_{110} a_{101} a_{011} \,=\,a_{100} a_{010} a_{001} a_{111} 
	\end{equation}
	holds. 
	The main component has dimension $9$, while its image in $\mathbb{WP}^8$ has dimension $5$ and is defined by the equations 
	$u_i^4+3v_i^2-4u_iw_i$ for $i=1,2,3$, with fibers given by cones over $\PP^1 {\times} \PP^1 {\times} \PP^1$.
	The following parametric representation of the main component in $\HC$
	$$  \begin{matrix} u_1 = \kappa_1-\kappa_2\,,\quad
	v_1 = \kappa_1^2- \kappa_2^2 \, , \quad
	w_1 = \kappa_1^3-\kappa_2^3 \, , \\
	u_2 = \kappa_3-\kappa_4 \, ,\quad
	v_2 = \kappa_3^2-\kappa_4^2 \, , \quad
	w_2 = \kappa_3^3 - \kappa_4^3\, , \\
	u_3 = \kappa_5- \kappa_6 \, , \quad
	v_3 = \kappa_5^2-\kappa_6^2\, ,\quad
	w_3 = \kappa_5^3-\kappa_6^3 \, , \\
	a_{111} = (\kappa_3-\kappa_5)(\kappa_1-\kappa_5) (\kappa_1-\kappa_3)  \lambda_1 \lambda_2 \lambda_3 \,, \,\,\,\,
	a_{011} = (\kappa_3-\kappa_5)(\kappa_2-\kappa_5)(\kappa_2-\kappa_3)  \lambda_2 \lambda_3\,, \\
	a_{101} = (\kappa_4-\kappa_5)(\kappa_1-\kappa_5)(\kappa_1-\kappa_4)  \lambda_1 \lambda_3\, ,\,\,\,\,
	a_{001} = (\kappa_4-\kappa_5)(\kappa_2-\kappa_5)(\kappa_2-\kappa_4)  \lambda_3\,, \\
	a_{110} = (\kappa_3-\kappa_6)(\kappa_1-\kappa_6)(\kappa_1-\kappa_3)  \lambda_1 \lambda_2 \, , \,\,\,\,
	a_{010} = (\kappa_3-\kappa_6)(\kappa_2-\kappa_6)(\kappa_2-\kappa_3)  \lambda_2 \, ,  \\
	a_{100} = (\kappa_4-\kappa_6)(\kappa_1-\kappa_6)(\kappa_1-\kappa_4)  \lambda_1 \, , \,\,\,\,
	a_{000} = (\kappa_4-\kappa_6)(\kappa_2-\kappa_6)(\kappa_2-\kappa_4)  
	\end{matrix}
	$$
	provides a way of identifying the $\tau$-fuction arising from \eqref{eq:cubetheta} with a $(3,6)$-soliton for the matrix
	\begin{equation}
	\label{eq:dreisechs}
	A \,\, = \,\, \begin{small} \begin{pmatrix} 
	\lambda_1 & 1 & 0  & 0 & 0 & 0 \\
	0 & 0 & \lambda_2 & 1 & 0  & 0 \\
	0 & 0 & 0 & 0 & \lambda_3 & 1  \end{pmatrix} . \end{small} \end{equation}

\end{ex}
The description given in Example \ref{ex:cube} of the projection of the main component $\HM$ into the space $\mathbb{\mathbb{W}\PP}^{3g-1}$ reveals itself to be true for any genus:
\begin{prop}
	The projection of $\HM$ into $\mathbb{\mathbb{W}\PP}^{3g-1}$ is a $(2g-1)$-dimensional variety defined by the vanishing of $ u_i^4 + 3v_i^2 - 4u_iw_i$ for $i = 1,2,\dots,g$.
\end{prop}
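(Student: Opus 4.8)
The plan is to realise the projection as the closure of the image of an explicit polynomial map and then to recognise the target variety directly. Write $\pi\colon\KWP\to\mathbb{W}\PP^{3g-1}$ for the projection onto the second factor. Because the coordinates $u_i,v_i,w_i$ in \eqref{eq:param} do not involve the parameters $\lambda_1,\dots,\lambda_g$, the Zariski closure of $\pi(\HM)$ coincides with the closure of the image of the map
\[
\bar\psi\colon\CC^{2g}\dashrightarrow\mathbb{W}\PP^{3g-1},\qquad
(\kappa_1,\dots,\kappa_{2g})\longmapsto\bigl[\,(u_i:v_i:w_i)_{i=1,\dots,g}\,\bigr],
\]
where $u_i=\kappa_{2i-1}-\kappa_{2i}$, $v_i=\kappa_{2i-1}^2-\kappa_{2i}^2$, $w_i=\kappa_{2i-1}^3-\kappa_{2i}^3$. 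I will prove that $\overline{\operatorname{im}\bar\psi}=V$, where $V$ is cut out by $f_i:=u_i^4+3v_i^2-4u_iw_i$ for $i=1,\dots,g$. Each $f_i$ is weighted-homogeneous of degree $4$ (since $\deg u_i^4=\deg v_i^2=\deg u_iw_i=4$), so $V$ is a well-defined closed subvariety of $\mathbb{W}\PP^{3g-1}$.

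\emph{The image lies in $V$.} Substituting the parameterisation, for each $i$ the polynomial $f_i$ factors as $(\kappa_{2i-1}-\kappa_{2i})^2$ times
\[
(\kappa_{2i-1}-\kappa_{2i})^2+3(\kappa_{2i-1}+\kappa_{2i})^2-4\bigl(\kappa_{2i-1}^2+\kappa_{2i-1}\kappa_{2i}+\kappa_{2i}^2\bigr),
\]
and the second factor expands to $0$ identically; hence $f_i$ vanishes on $\operatorname{im}\bar\psi$ for every $i$. Equivalently, the $f_i$ are precisely the relations obtained by eliminating $\kappa_{2i-1},\kappa_{2i}$ from the inversion formulas \eqref{eq:invertedKappas}, which generalises the $g=3$ computation displayed above.

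\emph{The image is dense in $V$, and $\dim V=2g-1$.} Consider the affine surface $S=V(u^4+3v^2-4uw)\subseteq\AA^3$. Viewing $u^4+3v^2-4uw=-4uw+(u^4+3v^2)$ as a polynomial of degree one in $w$ over the UFD $\CC[u,v]$, its content is a unit because $\gcd(u,\,u^4+3v^2)=1$; hence it is irreducible, and $S$ is an irreducible surface. (Concretely, $S\cap\{u\neq0\}$ is the graph $w=(u^4+3v^2)/(4u)$, and the remaining locus $S\cap\{u=0\}=\{(0,0,w)\}$ lies in its closure.) The block map $(\kappa_{2i-1},\kappa_{2i})\mapsto(u_i,v_i,w_i)$ takes values in $S$ and is birational onto $S$ by \eqref{eq:invertedKappas}; hence the affine map $\CC^{2g}\to\AA^{3g}$ underlying $\bar\psi$ is dominant onto the product $S^{\times g}$, which is irreducible (a product of irreducible $\CC$-varieties) of dimension $2g$. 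Since $S^{\times g}$ is exactly the affine cone over $V$, it follows that $V$ is irreducible of dimension $2g-1$ and that $\overline{\operatorname{im}\bar\psi}=V$. (Alternatively, the $f_i$ form a regular sequence in the $3g$ homogeneous coordinates, as the $i$-th involves a fresh block of three variables, so $V$ is a complete intersection of dimension $2g-1$.)

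The computations involved are elementary; the only point needing a little care is the passage between the affine cone $S^{\times g}\subseteq\AA^{3g}$ and its projectivisation $V\subseteq\mathbb{W}\PP^{3g-1}$. Should one wish to work entirely inside $\mathbb{W}\PP^{3g-1}$, the dimension count can instead be obtained from the generic fibre of $\bar\psi$: a general point of $V$ has all $u_i\neq0$, so \eqref{eq:invertedKappas} recovers $(\kappa_1,\dots,\kappa_{2g})$ from any choice of homogeneous representative, and the fibre is the single orbit $\{(c\kappa_1,\dots,c\kappa_{2g}):c\in\CC^*\}$ of the diagonal scaling. Hence $\overline{\operatorname{im}\bar\psi}$ has dimension $2g-1$, and irreducibility of the source forces it to coincide with the irreducible variety $V$.
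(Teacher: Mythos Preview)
Your argument is correct and follows essentially the same route as the paper's proof: both directions ultimately rest on the inversion formulas \eqref{eq:invertedKappas}, which recover $(\kappa_{2i-1},\kappa_{2i})$ from any $(u_i,v_i,w_i)$ with $u_i\neq 0$ satisfying $f_i=0$. The paper obtains the containment $\pi(\HM)\subseteq V$ by citing Lemma~\ref{lem:gquadrics} (the $f_i$ are among the defining equations of $\HC$ that involve no $a$-variables), whereas you verify the identity $f_i(u_i,v_i,w_i)=0$ on the parameterisation directly; and for surjectivity onto a dense open of $V$ the paper simply invokes \eqref{eq:invertedKappas} to produce a preimage, while you add the extra bookkeeping that $S=V(u^4+3v^2-4uw)$ is an irreducible surface, that $S^{\times g}$ is therefore irreducible of dimension $2g$, and hence that $V$ is irreducible of dimension $2g-1$. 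This last point is a genuine addition: the paper's proof does not explicitly justify the dimension claim or the irreducibility of the zero locus $V$, so your version is a little more complete in that respect, but the underlying idea is the same.
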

\begin{proof}
	One direction (i.e., that the relations $u_i^4 + 3v_i^2 - 4u_iw_i$ hold in the projection) is immediate, as these are polynomials defining the Hirota variety (see Lemma \ref{lem:gquadrics}) that do not include any $a_i$. For the other direction, it suffices to exhibit a point in $\HM$ for any $(\mathbf{u, v, w})$ which satisfy  $u_i^4 + 3v_i^2 - 4u_iw_i$ for all $i\in [g]$. Using the inverse map given in \eqref{eq:invertedKappas}, given any $u_i, v_i, w_i$ satisfying $u_i^4 + 3v_i^2 - 4u_iw_i\,=\,0$, we uniquely determine (up to a scaling factor) $\kappa_{2i-1}, \kappa_{2i}$. We can then choose arbitrary $\lambda_1, \dots, \lambda_g$ to get a point $(\mathbf{a,( u, v, w)})$, so we are done.
\end{proof}

We conclude this section by stating two conjectures that consolidate and generalize the results above to any genus. Indeed, a generalization of  Theorem \ref{thm:irredcomp} to any genus would provide a solution to the weak Schottky problem for rational nodal curves of genus $g$. The analogous results in the case of smooth curves were proven by Dubrovin in \cite{dubrovin1982the,dubrovin1981theta}.

\begin{conj}[Weak Schottky Problem]
\label{conj:dim}
	For any genus $g$, the main component of the Hirota variety $\HM$ is a $3g$-dimensional irreducible component of $\HC$ with a parametric representation given by (\ref{eq:param}).
\end{conj}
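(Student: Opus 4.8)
The plan to attack Conjecture~\ref{conj:dim} is as follows. By Theorem~\ref{thm:imagecontained}, $\HM=\overline{\phi(\CC^{3g})}$ is already known to be irreducible of dimension $3g$ and to carry the parametric representation~\eqref{eq:param}, so the only remaining content of the conjecture is that $\HM$ is a \emph{maximal} irreducible subvariety of $\HC$, i.e. one of its irreducible components. We would establish this by showing that $\HC$ is smooth of dimension $3g$ at a general point of $\HM$: if $\phi(p)$ is a smooth point of $\HC$ for $p$ outside a proper closed subset of $\CC^{3g}$, then it lies on a unique irreducible component of $\HC$, which must coincide with $\HM$ since both are irreducible of dimension $3g$ and pass through $\phi(p)$. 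Writing $r=\dim\bigl((\CC^*)^{2^g}\times\mathbb{W}\PP^{3g-1}\bigr)=2^g+3g-1$, this is exactly the statement that the Jacobian of a generating set of the ideal of $\HC$, evaluated at $\phi(p)$, has rank $r-3g=2^g-1$; in other words, one wants to carry out for every $g$ the rank computation that the proof of Theorem~\ref{thm:irredcomp} performs by machine for $g\le 9$.

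One of the two inequalities is free: since $\phi$ is birational, $\phi(p)$ is a smooth point of $\HM\subseteq\HC$ for general $p$, and the images under $d\phi_p$ of $\partial_{\la_1},\dots,\partial_{\la_g}$ and of $\partial_{\kappa_1},\dots,\partial_{\kappa_{2g}}$ span a $3g$-dimensional subspace of $T_{\phi(p)}\HC$, so the Jacobian rank is automatically at most $2^g-1$. The substance is the reverse inequality, i.e. that these $3g$ directions already exhaust $T_{\phi(p)}\HC$. I would attack it through the combinatorial presentation of the ideal of $\HC$ in \cite[Section~3]{AFMS}, whose generators are indexed by $\mathbf{e}\in\{0,1,2\}^g$, the generator attached to $\mathbf{e}$ involving only the coordinates $a_\mathbf{c}$ with $\mathbf{c}\le\mathbf{e}$ and weighting each term by the even Hirota quartic $\xi_1^4-4\xi_1\xi_3+3\xi_2^2$ evaluated on signed sums of the triples $(u_i,v_i,w_i)$ over the free indices $\{i:e_i=1\}$. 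Two structural features should make the rank tractable. First, the hyperoctahedral group $(\ZZ/2)^g\rtimes S_g$ acts compatibly on the cube, on the $a$-coordinates, and on the generators, so the Jacobian is equivariant and its rank may be computed one isotypic component at a time. Second, the $g$-cube is the union of two $(g-1)$-cubes, which invites an induction on $g$: one fixes generic $\kappa_{2g-1},\kappa_{2g}$ (equivalently a general $(u_g,v_g,w_g)$ on $u_g^4+3v_g^2-4u_gw_g=0$ via \eqref{eq:invertedKappas}); the generators with $e_g\in\{0,2\}$ then reduce, after the character rescaling of the $a$-variables forced by $\kappa_{2g-1},\kappa_{2g}$, to two copies of the genus $g-1$ relations, while the generators with $e_g=1$ have to be shown to supply exactly the remaining rank, so that the Jacobian rank grows by $2^{g-1}$ at each step — precisely the number of new $a$-coordinates left over once the three new parameters $\kappa_{2g-1},\kappa_{2g},\la_g$ absorb the three new directions $(u_g,v_g,w_g)$.

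The step I expect to be the main obstacle is controlling this rank \emph{uniformly} in $g$. The Hirota variety is genuinely reducible — already five components for $g=3$ — so the Jacobian rank really does drop on a proper subvariety of $\HC$, and one must check that $\phi(p)$, for $p$ off an explicit proper closed set (in particular off the locus $K$ where two of the $\kappa_i$ coincide), avoids every locus along which a spurious component meets $\HM$. Closing the induction thus demands some control over those other components — arising from degenerate soliton data, such as coincidences among the $\kappa_i$, or soliton matrices whose nonzero maximal minors index a matroid other than $U_{1,2}^{\oplus g}$ — and a proof that each meets $\HM$ in a proper subvariety. A more geometric route to the same point would sidestep the Jacobian: show that any finite exponential sum supported on the $g$-cube which solves the KP equation and for which the $\kappa_{2i-1},\kappa_{2i}$ recovered via \eqref{eq:invertedKappas} are pairwise distinct is necessarily a $(g,2g)$-soliton whose coefficient matrix is, modulo the $\mathrm{GL}_g$- and torus-redundancies, the block-bidiagonal matrix \eqref{eq:matrixSoliton} — because $U_{1,2}^{\oplus g}$ has an essentially rigid space of realizations — and hence lies in $\HM$; an openness argument would then again exhibit $\HM$ as a component. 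In either formulation the crux is this rigidity of cube-supported KP solutions, and a proof of it valid for all $g$ would settle Conjecture~\ref{conj:dim} and give a clean solution of the weak Schottky problem for rational nodal curves.
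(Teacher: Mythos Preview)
This statement is presented in the paper as an open \emph{conjecture}; there is no proof in the paper to compare your proposal against. The nearest result is Theorem~\ref{thm:irredcomp}, which establishes the claim only for $g\le 9$, and does so by exactly the computation your first paragraph isolates: a \texttt{Macaulay2} check that the Jacobian of the defining ideal of $\HC$, evaluated at a general image point $\phi(p)$, has rank $r-3g=2^g-1$. So your reduction is correct and coincides with the paper's method; what the paper lacks, and what you are proposing, is a way to make that rank computation uniform in $g$.

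Your write-up is explicitly a research plan rather than a proof, and you are candid about this. The structural tools you suggest --- the hyperoctahedral symmetry of the generator set, an induction splitting the $g$-cube into two $(g{-}1)$-cubes, and the alternative geometric route of showing that any cube-supported KP $\tau$-function with pairwise distinct $\kappa_i$ is forced to be a $(g,2g)$-soliton for the block-bidiagonal matrix~\eqref{eq:matrixSoliton} --- all go beyond anything in the paper and are plausible lines of attack. But the genuine gap you yourself flag remains open: neither route currently supplies a mechanism, valid for all $g$, excluding the possibility that one of the extra components of $\HC$ (already five at $g=3$) contains $\HM$; equivalently, neither closes the inductive step that the $e_g=1$ generators contribute the missing $2^{g-1}$ to the Jacobian rank. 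That obstruction is precisely why the paper leaves the statement as a conjecture. One small correction for the induction: the generators are indexed not by all of $\{0,1,2\}^g$ but by $\mathcal{C}^{[2]}=\{0,1,2\}^g\setminus\{0,2\}^g$, the points with at least one coordinate equal to $1$ (cf.\ Proposition~\ref{countattained} and the paragraph following it).
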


\begin{conj}[Strong Schottky Problem]\label{conj:imageComp}
	$\HM = \HC^I$.
\end{conj}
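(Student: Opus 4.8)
The inclusion $\HM \subseteq \HC^I$ is immediate: by Theorem~\ref{thm:imagecontained} the image of $\phi$ lies in $\HC$, and on the complement of the locus $K$ one has $u_i = \kappa_{2i-1} - \kappa_{2i} \neq 0$, so a dense subset of $\HM$ already sits in $\{\mathbf u \neq \mathbf 0\}$. For the reverse inclusion, observe that $\HC^I$ is precisely the union of those irreducible components of $\HC$ that are not contained in the coordinate subspace $\{u_1 = \cdots = u_g = 0\}$. Since $\HM$ is one such component (Theorem~\ref{thm:irredcomp}, for $g\leq 9$) and irreducible components are maximal, Conjecture~\ref{conj:imageComp} is equivalent to the assertion that \emph{every} component of $\HC$ other than $\HM$ is contained in $\{u_1 = \cdots = u_g = 0\}$.

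So let $Z$ be a component of $\HC$ and suppose first that all the coordinates $u_i$ are generically nonzero on $Z$; I will show $Z = \HM$. Pick a general point $(\mathbf a, (\mathbf u, \mathbf v, \mathbf w)) \in Z$: then all $u_i \neq 0$, all $a_{\mathbf c}\in\CC^*$ (as $\HC \subseteq (\CC^*)^{2^g}\times\mathbb{W}\PP^{3g-1}$), and the scalars $\kappa_1,\dots,\kappa_{2g}$ defined by \eqref{eq:invertedKappas} are pairwise distinct. Writing $\Theta_j := \kappa_j x + \kappa_j^2 y + \kappa_j^3 t$ and $I(\mathbf c) := \{2i : c_i = 0\}\cup\{2i-1 : c_i = 1\}$, a direct computation using $u_i = \kappa_{2i-1}-\kappa_{2i}$, $v_i = \kappa_{2i-1}^2 - \kappa_{2i}^2$, $w_i = \kappa_{2i-1}^3 - \kappa_{2i}^3$ gives $\mathbf c^T(\mathbf u x + \mathbf v y + \mathbf w t) = \Theta_{I(\mathbf c)} - \sum_{i=1}^g \Theta_{2i}$. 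Hence, after the linear gauge transformation $\tau \mapsto \hat\tau := \exp[\sum_{i=1}^g\Theta_{2i}]\,\tau = \sum_{\mathbf c\in\mcC} a_{\mathbf c}\exp[\Theta_{I(\mathbf c)}]$ — which preserves both the bilinear relation \eqref{eq:hirotabilinearform} and the function \eqref{eq:solutions} — the $\tau$-function becomes a finite exponential sum supported on the ``transversal'' $g$-subsets of $[2g]$ (those meeting each pair $\{2i-1,2i\}$ in exactly one element).

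Next I would invoke the classification of finite exponential-sum solutions of the KP bilinear equation (Sato theory; see \cite{Kodama_book,sato1981soliton}): a finite sum $\sum_I b_I\exp[\Theta_I]$ with distinct $\kappa_j$ solves \eqref{eq:hirotabilinearform} if and only if the numbers $b_I / \prod_{j<k\in I}(\kappa_j - \kappa_k)$ — set to $0$ for the subsets $I$ that do not occur — are the Plücker coordinates of a point of $\mathrm{Gr}(g,2g)$. (Alternatively, one can replace this step by a direct analysis of the combinatorial equations cutting out $\HC$ in \cite[Section~3]{AFMS}.) Since all $a_{\mathbf c}\neq 0$ and the $\kappa_j$ are distinct, the nonzero Plücker coordinates of the resulting point are exactly the transversal ones, i.e.\ its matroid is the direct sum $\bigoplus_{i=1}^g U_{1,2}$ of rank-one uniform matroids on the blocks $\{2i-1,2i\}$. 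In any realization, columns $2i-1$ and $2i$ span a common line $L_i$ and $\CC^g = L_1\oplus\cdots\oplus L_g$; choosing $L_i = \CC e_i$ and rescaling rows, the point of $\mathrm{Gr}(g,2g)$ is represented by a matrix of the shape \eqref{eq:matrixSoliton} for a unique $(\lambda_1,\dots,\lambda_g)\in(\CC^*)^g$. Reading off maximal minors gives $a_{\mathbf c} = \prod_{\{i:c_i=1\}}\lambda_i\cdot\prod_{j<k\in I(\mathbf c)}(\kappa_j-\kappa_k)$ together with the formulas for $u_i,v_i,w_i$, i.e.\ exactly \eqref{eq:param}; thus $(\mathbf a,(\mathbf u,\mathbf v,\mathbf w)) = \phi(\lambda_1,\dots,\lambda_g,\kappa_1,\dots,\kappa_{2g})\in\HM$, whence $Z\subseteq\HM$ and so $Z=\HM$.

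The remaining case — a component $Z$ of $\HC$ on which some, but not all, of the $u_i$ vanish identically — is where I expect the real difficulty to lie. Recall that $u_i = 0$ already forces $v_i = 0$ on $\HC$ by Lemma~\ref{lem:gquadrics}; the plan would be to show that on such a $Z$ it also forces $w_i = 0$, so that the whole block $(u_i,v_i,w_i)$ vanishes and, correspondingly, the variable $z_i$ drops out of the theta function, reducing the situation to a genus $g-1$ instance and permitting an induction. Making this rigorous requires a careful study of the defining equations of $\HC$ near the loci $\{u_i=0\}$ (equivalently, of the combinatorial equations of \cite[Section~3]{AFMS}); this is the step I would attack last, and it is the one genuinely separating Conjecture~\ref{conj:imageComp} from Theorem~\ref{thm:irredcomp}. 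In the meantime, for a fixed small genus the equivalence established in the first paragraph reduces the conjecture to checking that all components of $\HC$ other than $\HM$ lie in $\{u_1=\cdots=u_g=0\}$, which is a finite computation (for $g=3$, that four of the five associated primes of $\HC$ contain $u_1,u_2,u_3$). A secondary point worth double-checking is the precise form of the Sato correspondence used above, namely that it is an ``if and only if'' already at the level of the single bilinear equation \eqref{eq:hirotabilinearform}; the combinatorial description in \cite[Section~3]{AFMS} furnishes a self-contained substitute should that be needed.
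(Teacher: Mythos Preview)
This statement is a \emph{conjecture} in the paper, not a theorem; the paper offers no proof. Its only comment is precisely your first paragraph: the inclusion $\HM\subseteq\HC^I$ follows from Theorem~\ref{thm:imagecontained}, while ``the other direction is more difficult'' and would require showing that every point of $\HC^I$ arises from the parameterization~\eqref{eq:param}. So there is nothing to compare your argument against.

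As for the proposal itself, it is a strategy rather than a proof, and you are candid about this. Two genuine gaps remain. First, in your ``all $u_i$ generically nonzero'' case you rely on the claim that a finite exponential sum $\sum_I b_I\exp[\Theta_I]$ with distinct $\kappa_j$ solves the \emph{single} bilinear equation \eqref{eq:hirotabilinearform} only if the normalized coefficients $b_I/\prod_{j<k\in I}(\kappa_j-\kappa_k)$ satisfy the Pl\"ucker relations. Sato theory gives this for the full KP hierarchy, not for \eqref{eq:hirotabilinearform} alone; as you note, one would have to extract it from the combinatorial equations of \cite[Section~3]{AFMS}, and doing so is essentially the content of the conjecture in this case. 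Second, and more seriously, your case of a component $Z$ on which some but not all $u_i$ vanish identically is not handled. Such a $Z$ is \emph{not} contained in $\{\mathbf u=\mathbf 0\}$, so by your own reformulation it is a component of $\HC^I$ and you must show $Z\subseteq\HM$. The sketched reduction to genus $g-1$ does not obviously do this: even granting $(u_i,v_i,w_i)=0$ so that $z_i$ drops out, the resulting $\tau$-function has coefficients $a_{(\mathbf c',0)}+a_{(\mathbf c',1)}$, and it is unclear how membership of that point in a lower-genus main component would lift back to $Z\subseteq\HM$ in genus $g$. Finally, note that even the $g=3$ check you propose is not carried out in the paper: it records that $\HC$ has five associated primes but does not verify that the four other than $\HM$ lie in $\{u_1=u_2=u_3=0\}$.
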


Notice that one direction of the Schottky problem is immediate from the proof of Theorem \ref{thm:imagecontained}: a sufficiently generic choice of $\kappa_i$ ensures that, in the image, the $u_i$ are nonzero, thus $\HM \subseteq \HC^I$. The other direction is more difficult. To prove it, one would need to show that any point in $\HC^I$ can be parameterized as in (\ref{eq:param}).

\section{Combinatorics of the Hirota Variety}\label{sec:combinatorics}
The results in this section describe in detail facts that we use in many of the proofs in Section \ref{sec:maincomp}. We begin by explaining how the nodal singularities on the curve $X$ induce the degenerate theta function in \eqref{eq:degenerateTheta}. To each curve of genus $g$ we associate a metric graph $\Gamma$ of genus $g$. This graph has a vertex for each
irreducible component, one edge for each intersection point between two components, and a node on an irreducible component gives a loop on the corresponding vertex. Hence, if $X$ is a rational nodal curve of genus $g$, the corresponding  metric graph $\Gamma$ is given by one unique node and $g$ cycles. Figure \ref{metric} illustrates an example when $g=5$. 
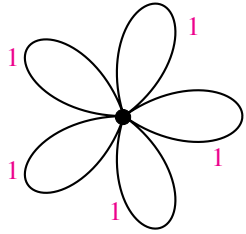
\begin{figure}[htbp]
	\centering
	
	\begin{tikzpicture}[scale=0.5]
	\begin{polaraxis}[grid=none, axis lines=none]
	\addplot[style={ultra thick},mark=none,domain=0:360,samples=300] { abs(cos(5*x/2))};
	\end{polaraxis}
	\draw[fill] (3.4,3.4) circle [radius=0.2];
	\node[color=magenta] at (3.2,0.9) {1};
	\node[color=magenta] at (5.25,5.8) {1};
	\node[color=magenta] at (.5,2) {1};
	\node[color=magenta] at (.5,5) {1};
	\node[color=magenta] at (5.9,2.33) {1};
	\end{tikzpicture}
	\caption{The metric graph for an irreducible rational nodal curve of genus 5.}
	\label{metric}
\end{figure}

The way we read off the tropical Riemann matrix $Q$ from a tropical curve is described in \cite{bolognese2017curves,chan2012combinatorics}. 
More explicitly, to determine such positive definite real symmetric $g \times g$ matrix one needs to fix a basis of cycles in $\Gamma$ and write these
as the $g$ rows of a matrix $\Lambda$ whose columns are
labeled by the edges of the graph. Let $\Delta$ be the diagonal matrix
whose entries are the edge lengths of $\Gamma$, then we define
$Q := \Lambda \Delta \Lambda^T$. 
Hence, for $X$ being a rational nodal curve the matrices $\Lambda$ and $\Delta$ both equal the identity matrix $I_g$, returning $Q = I_g$ as the tropical Riemann matrix.
Theorem 3 in \cite{AFMS} describes the Riemann theta function \eqref{eq:thetaFunction} when the curve degenerates. In our case, the distance induced on $\RR^g$ is the Euclidean distance and we can fix the point ${\bf a} = (\frac{1}{2},\frac{1}{2},\dots,\frac{1}{2}) \in \RR^g$ as a vertex of the \textit{Voronoi cell} for $I_g$ given by the cube with vertices $(\pm\frac{1}{2},\pm\frac{1}{2},\dots,\pm\frac{1}{2})$. Under these hypotheses, the support of the degenerate theta function is the Delaunay set
\begin{equation}\label{eq:delaunay}
\mathcal{C} \,\,=\,\, \mathcal{D}_{{\bf a},I_g} \,\,=\,\, \{\,\ccc \in \ZZ^g \;: \; \norm{\bf a}^2 = \norm{{\bf a}-\ccc}^2\,\} \,\,=\,\, \{0,1\}^g,
\end{equation}
where $\norm{\cdot}$ denotes the Euclidean norm. The degenerate theta function is then a finite sum of $2^g$ exponentials 
\begin{equation}\label{eq:degenerateTheta2}
\theta_{\mathcal{C}}({\bf z}) \,\, = \, \sum_{\ccc \in \{0,1\}^g} a_{\ccc} \exp \big[ \ccc^T {\bf z}\big], \quad \hbox{where}\quad  a_{\ccc} \,=\, \exp \biggl[\frac{1}{2} \ccc^T R_0 \ccc\biggr].
\end{equation}
Here, the matrix $R_0$ is the limit of a matrix $R_{\epsilon}$ which is a symmetric $g\times g$ matrix with entries given by complex analytic functions in $\epsilon$ converging for $\epsilon\to 0$. This comes from degenerating the family of Riemann matrices given by $B_{\epsilon} = -\frac{1}{\epsilon}Q + R_\epsilon$. The matrices $B_\epsilon$ lie in the Schottky locus. Corollary 6 in \cite{AFMS} describes the polynomials defining the Hirota variety associated to $\mathcal{C}$. These correspond to points in the set 
\begin{equation*}
\mathcal{C}^{[2]}\,\, = \,\,
\bigl\{ \,{\bf c}_k + {\bf c}_\ell \,\,: \,\, 1 \leq k < \ell \leq  m \,\bigr\}
\,\, \subset \,\, \ZZ^g,
\end{equation*}
where one says that a point ${\bf d}$ in $\mathcal{C}^{[2]}$ is {\em uniquely attained} if there exists precisely one index pair $(k,\ell)$ such~that ${\bf c}_k + {\bf c}_\ell = {\bf d}$. Let $P(x, y, t) = x^4 + 3y^2 - 4xt$. The polynomials defining $\HC$ are explicitly given by the quartics 
\begin{equation}\label{eq:quartic}
P_{k \ell}({\bf u},{\bf v},{\bf w}) \,\,\, := \,\,\,
P \bigl( \,({\bf c}_k - {\bf c}_\ell) \cdot {\bf u},
\,({\bf c}_k - {\bf c}_\ell) \cdot {\bf v},
\,({\bf c}_k - {\bf c}_\ell) \cdot {\bf w} \bigr),
\end{equation}
when ${\bf d} = {\bf c}_k + {\bf c}_\ell$ is uniquely attained, and by 
$$\sum_{1 \leq k < \ell \leq m \atop  \mathbf{c}_k + \mathbf{c}_\ell = \mathbf{d}} P_{k \ell}({\bf u},{\bf v},{\bf w}) a_ka_\ell,$$ 
when ${\bf d}\in \mathcal{C}^{[2]}$ is not uniquely attained. Hence, to better understand the variety $\HC$ we investigate the elements in the set $\mathcal{C}^{[2]}$. We say that a point $\ccc \in \mathcal{C}^{[2]}$ is attained $n$ times if there exist $n$ distinct pairs $(k,\ell)$ such~that ${\bf c}_k + {\bf c}_\ell = {\bf d}$. 

\begin{prop}\label{countattained}
	A point $\mathbf{c}=(c_1, \dots, c_g)$ in $\mathcal{C}^{[2]}$ is attained $2^{d-1}$ times, where ${d =|\{i: c_i = 1\}|}$.
\end{prop}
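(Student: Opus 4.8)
The plan is to analyze, coordinate by coordinate, the ways a point $\mathbf{c}\in\mathcal{C}^{[2]}$ can be written as an \emph{ordered} sum of two elements of $\mathcal{C}=\{0,1\}^g$, and then pass from ordered pairs to unordered ones.

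First I would record the elementary observation that, since every entry of $\mathbf{c}_k$ and $\mathbf{c}_\ell$ lies in $\{0,1\}$, each coordinate $c_i$ of a point $\mathbf{c}=\mathbf{c}_k+\mathbf{c}_\ell\in\mathcal{C}^{[2]}$ lies in $\{0,1,2\}$, and I would split into three cases according to the value of $c_i$: if $c_i=0$ then necessarily the $i$-th entries of both summands are $0$; if $c_i=2$ then both are $1$; and if $c_i=1$ then exactly one of the two summands has $i$-th entry equal to $1$, giving the two possibilities $(0,1)$ and $(1,0)$ for that coordinate.

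Next I would count the ordered decompositions. The coordinates with $c_i\in\{0,2\}$ are forced, while each of the $d$ coordinates with $c_i=1$ can be resolved in two independent ways, so there are exactly $2^d$ ordered pairs $(\mathbf{a},\mathbf{b})\in\mathcal{C}\times\mathcal{C}$ with $\mathbf{a}+\mathbf{b}=\mathbf{c}$. I would then note that $d\geq 1$ for every $\mathbf{c}\in\mathcal{C}^{[2]}$, since $d=0$ forces $\mathbf{c}=2\mathbf{a}$ for a unique $\mathbf{a}\in\mathcal{C}$, which is not a sum of two \emph{distinct} vertices of the cube; consequently, in each of the $2^d$ ordered pairs above one has $\mathbf{a}\neq\mathbf{b}$. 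Finally, the involution $(\mathbf{a},\mathbf{b})\mapsto(\mathbf{b},\mathbf{a})$ acts without fixed points on this $2^d$-element set, hence has $2^{d-1}$ orbits, and each orbit corresponds to exactly one unordered pair $\{\mathbf{c}_k,\mathbf{c}_\ell\}$ (equivalently, one index pair $(k,\ell)$ with $k<\ell$) such that $\mathbf{c}_k+\mathbf{c}_\ell=\mathbf{c}$. Therefore $\mathbf{c}$ is attained $2^{d-1}$ times.

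There is essentially no serious obstacle here; the argument is purely combinatorial. The only point requiring a little care is the verification that $d\geq 1$ for points of $\mathcal{C}^{[2]}$, which guarantees that the two summands are genuinely distinct and hence that the swap involution is fixed-point-free — this is exactly what makes the factor of $\tfrac12$ (turning $2^d$ into $2^{d-1}$) legitimate, and it follows at once from the coordinatewise case analysis.
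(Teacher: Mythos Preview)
Your proof is correct and follows essentially the same coordinate-by-coordinate case analysis as the paper's proof. The only cosmetic differences are that you count ordered pairs first and halve via the swap involution, whereas the paper passes directly to $2^{d-1}$ unordered pairs, and the paper adds the geometric interpretation of these pairs as diagonals of the $d$-dimensional face of the cube determined by the indices with $c_i\neq 1$.
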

\begin{proof}
	For a point $\mathbf{c} \in \mathcal{C}^{[2]}$, consider the set of indices $I = \{i: c_i \neq 1\}$. Suppose now $\mathbf{c} = \mathbf{c}_1+\mathbf{c}_2$ for some points $\mathbf{c}_1, {\bf c}_2 \in \mathcal{C}$. Then, for any $i \in I$, if $c_i = 0$ then $c_{1i} = c_{2i} = 0$, while if $c_i = 2$ then $c_{1i} = c_{2i} = 1$. In the first case, this means that both $\mathbf{c}_1$ and $\mathbf{c}_2$ lie on the face of the $g$-cube defined by the $i$-th coordinate hyperplane $x_i = 0$. In the latter case, $\mathbf{c}_1,{\bf c}_2$ lie on the face defined by $x_i = 1$. The full set  $I$ of indices corresponding to elements $\neq 1$ defines a set of restrictions on $x_i$ for $i \in I$. In fact, it defines a face of codimension $|I|$ (and thus dimension $d$) that $\ccc_1$ and $\ccc_2$ lie on.
	Let $[g] = \{1,\dots,g\}$. For the indices $i \in [g]\setminus I$, we have exactly one of $c_{1i}, c_{2i}$ equal to $1$. This gives exactly $2^{d-1}$ such pairs. These pairs can be viewed as diagonals of the faces defined by the restrictions given by $I$: they are the points which are distinct from one another in each coordinate except for the ones fixed by the face.   
\end{proof}

As discussed in the proof, the points in $\mathcal{C}^{[2]}$ correspond to $d$-dimensional faces of the $g$-cube, where $d$ is the number of coordinates equal to $1$. Hence $|\mathcal{C}^{[2]}| = \sum_{d=1}^g 2^{g-d}\binom{g}{d}$ and the pairs that sum to points in $\mathcal{C}^{[2]}$ correspond to diagonals of the associated face. Another way to count $|\mathcal{C}^{[2]}|$ is to observe that it consists exactly of the points in $\{0, 1, 2\}^g$ which have at least one $1$, so there are $3^g - 2^g$ of them.

We now investigate the polynomials arising from the points in $\mathcal{C}^{[2]}$. The points which are attained once correspond to the edges (one-dimensional faces) of the cube and the unique pair that adds up to such a point are the two vertices $\mathbf{c}_k, \mathbf{c}_\ell$ comprising the edge. Hence, these points contribute the quartic \eqref{eq:quartic}, one can notice that this quartic depends only on the difference $\mathbf{c}_k - \mathbf{c}_\ell$, which is the same for all edges going in the same direction (that is, all edges whose corresponding point in $\mathcal{C}^{[2]}$ has the unique $1$ at the same index). This reasoning yields the immediate result 

\begin{lemma}\label{lem:gquadrics}
The set $\mathcal{C}^{[2]}$ contains $g2^{g-1}$ points which are uniquely attained. These contribute as generators of the ideal defining the Hirota variety $\mathcal{H}_{\mathcal{C}}$ (and therefore, also $\mathcal{H}_{\mathcal{C}}^I$) with $g$ quartics of the form $u_i^4 - 4 u_i w_i + 3 v_i^2$, for $i = 1,2,\dots , g$.
\end{lemma}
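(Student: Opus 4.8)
This statement is essentially a corollary of Proposition~\ref{countattained} together with the description \eqref{eq:quartic} of the generators of the ideal of $\HC$ recalled just above, so the plan is short. First I would settle the count. By Proposition~\ref{countattained} a point $\ccc \in \mathcal{C}^{[2]}$ is uniquely attained precisely when $2^{d-1}=1$, i.e. when $d = |\{i : c_i = 1\}| = 1$; such a $\ccc$ has a single coordinate equal to $1$ and each of the remaining $g-1$ coordinates in $\{0,2\}$. There are $g$ choices for the position of the $1$ and $2^{g-1}$ choices for the rest, giving $g\,2^{g-1}$ such points. (Equivalently, under the face correspondence of the proof of Proposition~\ref{countattained} these are exactly the $1$-dimensional faces, and the $g$-cube has $g\,2^{g-1}$ edges.)

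Next I would identify the associated quartics. Fix a uniquely attained $\mathbf{d}$ whose unique coordinate equal to $1$ sits in position $i$, and let $(\ccc_k,\ccc_\ell)$ be the unique pair with $\ccc_k + \ccc_\ell = \mathbf{d}$. The vectors $\ccc_k,\ccc_\ell \in \{0,1\}^g$ agree in every coordinate except the $i$-th, where one is $0$ and the other is $1$, so $\ccc_k - \ccc_\ell = \pm \mathbf{e}_i$, the $i$-th standard basis vector. Substituting into \eqref{eq:quartic} gives $P_{k\ell}(\uu,\vv,\ww) = P(\pm u_i,\pm v_i,\pm w_i)$ with $P(x,y,t) = x^4 + 3y^2 - 4xt$. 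Since $P$ is invariant under the simultaneous sign change $(x,y,t)\mapsto(-x,-y,-t)$, this equals $u_i^4 - 4u_iw_i + 3v_i^2$, independently of which of the $2^{g-1}$ edges in direction $i$ was chosen. Hence the $g\,2^{g-1}$ uniquely attained points contribute only the $g$ distinct quartics $u_i^4 - 4u_iw_i + 3v_i^2$, $i=1,\dots,g$, among the generators supplied by Corollary~6 of \cite{AFMS}.

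For the parenthetical remark about $\HC^I$: by definition $\HC^I$ is the Zariski closure of a subset of $\HC$, hence $\HC^I \subseteq \HC$ and the ideal of $\HC$ is contained in that of $\HC^I$; in particular these $g$ quartics vanish on $\HC^I$ and may be taken among its defining equations. I do not expect a genuine obstacle here — the only point requiring any care is the sign bookkeeping $\ccc_k - \ccc_\ell = \pm\mathbf{e}_i$ combined with the evenness of $P$, and once Proposition~\ref{countattained} and the formula \eqref{eq:quartic} are in hand the lemma is immediate.
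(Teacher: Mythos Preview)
Your proposal is correct and follows essentially the same approach as the paper: the lemma is stated there as an immediate consequence of the preceding discussion (uniquely attained points correspond to edges, and the quartic $P_{k\ell}$ depends only on the edge direction via $\ccc_k-\ccc_\ell$), which is exactly the argument you spell out. Your version is in fact more explicit than the paper's, particularly in handling the $\pm\mathbf{e}_i$ sign via the evenness of $P$ and in justifying the $\HC^I$ clause, but the underlying reasoning is the same.
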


Recall that the Hirota variety lies in the ambient space $(\CC^*)^{2^g} \times \mathbb{WP}^{3g-1}$. The coordinate ring is $\CC[\mathbf{a}^{\pm 1}, \mathbf{u, v, w}]$  and the ideal defining $\mathcal{H}_\mathcal{C}$ has $g + \sum_{d = 2}^g 2^{g-d}\binom{g}{d}$ generators, one for each edge direction, and one for each face of every dimension from 2 up to $g$.

The combinatorics of the cube has already been shown to be important when studying the generators of the Hirota variety. In what follows, we discuss more of the combinatorics of the cube as it relates to the main component, presenting a more general version of Lemma \ref{lem:gquadrics}. We begin with some definitions. 

We denote $\hat{\mathcal{C}}$ the convex hull of the set $\mathcal{C} = \{0,1\}^g$ in $\RR^g$ with coordinates $x_1,x_2,\dots,x_g$. A $d$-dimensional face of $\hat{\mathcal{C}}$ is determined by fixing $g-d$ indices of the vertices defining it. These indices are precisely the ones appearing in the coordinate hyperplanes that define each face.
 We call the \emph{direction} of the face the set of indices $I = \{i_1,i_2,\dots,i_d\}$ that are not fixed. Furthermore, if two $d$-dimensional faces have the same direction, we define the \emph{difference} between them to be the set $J$ of fixed indices in the two faces which are different.

\begin{ex}[$g=3, d=1$]
Let $\mathcal{C} = \{0,1\}^3$. The direction of an edge is given by a set with one element, namely the index of the standard basis vector to which the edge is parallel. For instance, the edges $\text{conv}((0,1,0),(0,1,1))$ and $\text{conv}((1,1,0),(1,1,1))$ are determined respectively by the hyperplanes $\{x_1=0,\,\,x_2=1\}$, and $\{x_1=x_2=1\}$. Hence, they have the same direction given by $I= \{3\}$. The difference of two edges on the same two dimensional face is also a one-element set, consisting of the index of the second standard basis vector defining the face (in addition to the standard basis vector given by the direction). Thus, the two edges above have difference $J = \{1\}$ corresponding to the coordinate hyperplane $x_1$ that determines different entries for the first coordinate of the vertices spanning the edges.
\end{ex}

In the following result, we restrict to the main component $\HM$ of the Hirota variety. We are interested in the points in $\HM$ that also verify the quartic relations
\begin{equation}\label{eq:a-relations}
a_{\ccc_1}a_{\ccc_2}a_{\ccc_3}a_{\ccc_4} \, = \,a_{{\bf d}_1} a_{{\bf d}_2} a_{{\bf d}_3}a_{{\bf d}_4}, \quad \text{with } \sum_{i=1}^4\ccc_i=\sum_{i=1}^4 {\bf d}_i,\;\; \sum_{i=1}^4\ccc_i^2=\sum_{i=1}^4 {\bf d}_i^2,
\end{equation}
where $\ccc_i,{\bf d}_i$ are points in $\{0,1\}^g$. For $g=3$, there exists a unique relation of this type, namely the one in \eqref{eq:THEquartic}. 
In particular, when the $a_i$ are exponentials of the form $a_\mathbf{c} = \exp[\frac{1}{2} \mathbf{c}R_0\mathbf{c}^T]$, as in \eqref{eq:degenerateTheta2}, then they verify these quartic relations. More generally, one has

\begin{lemma}
	The closure of the image of the map $\psi : \text{Sym}^2(\CC^g) \to \PP^{2^g-1}$ defined by 
	\begin{equation}\label{eq:embeddingR}
	R \,\,\mapsto\,\, \left(\,a_{\ccc}\,=\,\exp\left[\frac{1}{2}\ccc^TR\ccc \right]\,\right)_{\ccc \in \{0,1\}^g}    
	\end{equation}
	is cut out by the equations in \eqref{eq:a-relations} and the additional equation $a_{{\bf 0}}=1$.
\end{lemma}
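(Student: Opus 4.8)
The plan is to identify the closure of the image of $\psi$ with a linear-algebra statement about the map $R \mapsto \left(\tfrac12\ccc^T R\ccc\right)_{\ccc}$ on the level of the logarithms, and then exponentiate. Write $q_R(\ccc) = \tfrac12 \ccc^T R \ccc$; since $R$ is symmetric with entries $R_{ij}$, and $\ccc \in \{0,1\}^g$, we have $q_R(\ccc) = \tfrac12\sum_i R_{ii} c_i + \sum_{i<j} R_{ij} c_i c_j$ (using $c_i^2 = c_i$). So $q_R$ is an affine-linear combination, with coefficients the $\binom{g}{2}+g$ independent entries of $R$, of the functions $\ccc \mapsto c_i$ and $\ccc \mapsto c_i c_j$ on the cube, together with the constant function $1$ (coefficient $0$, forcing $q_R(\mathbf 0)=0$). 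Thus the vector $(q_R(\ccc))_{\ccc\in\{0,1\}^g} \in \CC^{2^g}$ ranges, as $R$ varies over $\mathrm{Sym}^2(\CC^g)$, exactly over the linear subspace $V \subseteq \CC^{2^g}$ spanned by $\{\mathbf 1\}\cup\{(c_i)_\ccc\}_i \cup \{(c_ic_j)_\ccc\}_{i<j}$ intersected with the hyperplane $\{$coordinate at $\mathbf 0$ is $0\}$; in fact $V$ is precisely that intersection because $\mathbf 1$ is the only basis vector nonzero at $\mathbf 0$. After exponentiating coordinatewise, the Zariski closure of the image of $\psi$ is the closure of the image of $V$ under the coordinatewise exponential $\CC^{2^g}\to(\CC^*)^{2^g}$; the closure of the image of a linear subspace $V$ under the coordinatewise exponential is the subtorus (saturated) whose character lattice is $\ZZ^{2^g}/(\ZZ^{2^g}\cap V^\perp)$, i.e. it is cut out exactly by the binomials $\mathbf{a}^m = \mathbf{a}^{m'}$ for $m - m' \in \ZZ^{2^g}$ lying in $V^\perp$, together with $a_{\mathbf 0} = 1$ coming from the hyperplane constraint.

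So the key step is a combinatorial/lattice computation: show that the lattice $L := \ZZ^{2^g}\cap V^\perp$ (vectors $m=(m_\ccc)$ with $\sum_\ccc m_\ccc = 0$, $\sum_\ccc m_\ccc c_i = 0$ for all $i$, and $\sum_\ccc m_\ccc c_i c_j = 0$ for all $i<j$) is generated, as a lattice, by the vectors $e_{\ccc_1}+e_{\ccc_2}+e_{\ccc_3}+e_{\ccc_4} - e_{\mathbf d_1}-\cdots-e_{\mathbf d_4}$ where $\sum \ccc_i = \sum \mathbf d_i$ and $\sum \ccc_i^2 = \sum \mathbf d_i^2$ (equivalently, the four-term relations of \eqref{eq:a-relations}, since for $0/1$ vectors $\ccc^2$ taken coordinatewise is just $\ccc$, so the second condition is automatic once the first holds — but I should double-check whether "$\ccc^2$" in \eqref{eq:a-relations} means the coordinatewise square or the vector of squared coordinates; if coordinatewise on $\{0,1\}$ these conditions coincide, and the relation is simply $\sum\ccc_i = \sum\mathbf d_i$). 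The conditions defining $L$ say that $\sum m_\ccc \cdot (1, \ccc, \ccc\otimes_{\mathrm{sym}}\ccc) = 0$, i.e. $m$ is a $\ZZ$-linear dependence among the "degree-$\le 2$ Veronese" vectors $v_\ccc := (1, c_1,\dots,c_g, (c_ic_j)_{i<j})$. Because each $v_\ccc \in \{0,1\}^{N}$ with $N = 1+g+\binom g2$ and the first coordinate is always $1$, any dependence can be balanced into equal numbers of $+$ and $-$ terms, and one wants to show every such balanced relation decomposes into "four-term" moves. I would prove this by induction on $\sum_\ccc |m_\ccc|$: given a nonzero $m\in L$, pick $\ccc$ with $m_\ccc > 0$ and $\mathbf d$ with $m_{\mathbf d} < 0$; the equality of "moments" $\sum m_\ccc c_i = 0$ etc. should let me find $\ccc', \mathbf d'$ with $m_{\ccc'}>0$, $m_{\mathbf d'}<0$ and $\ccc + \ccc' = \mathbf d + \mathbf d'$ coordinatewise as points in $\{0,1,2\}^g$ — this is where one uses that the cube's vertices, viewed as $0/1$ vectors, have the property that any "exchange" preserving first and second symmetric moments is generated by local swaps on pairs of coordinates (a Graver-basis-type statement for the second Veronese of the cube). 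Subtracting the corresponding four-term generator from $m$ strictly decreases $\sum|m_\ccc|$, completing the induction.

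The main obstacle I expect is exactly that last lattice-generation claim: proving that $L$ is generated by the four-term relations rather than merely cut out (over $\QQ$) by the moment conditions. Concretely, I must rule out "primitive" syzygies among the $v_\ccc$ of length $> 8$ that are not $\ZZ$-combinations of length-$8$ ones, which amounts to a Graver basis computation for the toric ideal of the map $\ccc \mapsto v_\ccc$. A clean way to organize this: show that the semigroup algebra in question is generated in degree $\le 2$ (Veronese-type) and that its syzygies among the "quadratic" monomials correspond precisely to pairs of vertices with equal pairwise sum — i.e., the relations $\chi^\ccc \chi^{\ccc'} = \chi^{\mathbf d}\chi^{\mathbf d'}$ whenever $\ccc+\ccc' = \mathbf d+\mathbf d'$ in $\{0,1,2\}^g$, which is visibly generated by single-coordinate swaps. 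One then lifts: any member of $L$, having zero first moment, is a balanced multiset relation on vertices; group it into pairs, and push the equal-sum pairings through. If a fully general argument proves stubborn, a fallback (matching the paper's style and the $g \le 9$ results elsewhere) is to verify the lattice-generation statement computationally for the relevant range of $g$ via \texttt{Macaulay2} by checking that the toric ideal of $\psi$ equals the ideal generated by the binomials \eqref{eq:a-relations} plus $a_{\mathbf 0}-1$, e.g. by confirming equality of saturations or via a Gröbner basis comparison; but I would push for the inductive/Graver argument to get all $g$ at once.

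Finally, I would assemble the pieces: (i) reduce $\psi$ to the coordinatewise exponential of the linear space $V$; (ii) identify the closure of the image of a linear space under coordinatewise exponentiation with the subtorus defined by the lattice $L = \ZZ^{2^g}\cap V^\perp$ (standard: a monomial $\mathbf a^m$ is constant on the image iff $\langle m, \log\text{-coords}\rangle$ is identically $0$ on $V$, i.e. $m\in V^\perp$; integrality of $m$ is what makes $\mathbf a^m$ a genuine function), plus the single equation $a_{\mathbf 0}=1$ from the hyperplane; (iii) invoke the lattice-generation step to conclude $L$ is generated by the exponent vectors of \eqref{eq:a-relations}, so the defining binomials are exactly those, giving $\overline{\mathrm{im}\,\psi} = V(\text{\eqref{eq:a-relations}}, a_{\mathbf 0}-1)$ as claimed. $\QED$
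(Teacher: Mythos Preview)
Your toric/lattice framework is a genuinely different route from the paper's, and the reduction in step (i)--(ii) is correct: writing $q_R(\ccc)=\tfrac12\sum_i R_{ii}c_i+\sum_{i<j}R_{ij}c_ic_j$ identifies $\overline{\mathrm{im}\,\psi}$ (inside the torus) with the subtorus whose lattice of characters vanishing on it is $L=\{m\in\ZZ^{2^g}:\sum_\ccc m_\ccc=\sum_\ccc m_\ccc c_i=\sum_\ccc m_\ccc c_ic_j=0\}$. The paper, by contrast, never touches lattices or Graver bases: it simply takes a point $\mathbf a$ satisfying the quartics, \emph{writes down} $R$ via $R_{ii}=2\log a_{e_i}$ and $R_{ij}=\log(a_{e_i+e_j}/a_{e_i}a_{e_j})$, and then checks by induction on $|I|$ that $a_{\ccc_I}=\exp[\tfrac12\ccc_I^TR\ccc_I]$, using at each step the single quartic
\[
a_{e_1+\cdots+e_n}\,a_{e_1+\cdots+e_{n-2}}\,a_{e_{n-1}}\,a_{e_n}
\;=\;
a_{e_1+\cdots+e_{n-1}}\,a_{e_1+\cdots+e_{n-2}+e_n}\,a_{e_{n-1}+e_n}\,a_{\mathbf 0}.
\]
This is short, works for all $g$, and---translated into your language---is exactly a proof that $L$ is generated by the degree-$4$ binomials: it exhibits, for every $\ccc$ with $|\mathrm{supp}(\ccc)|\ge 3$, an explicit element of the claimed generating set that eliminates $a_\ccc$ in favor of $a$'s with smaller support.

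There are two concrete problems with your proposal as written. First, your parenthetical that ``the second condition is automatic once the first holds'' is dangerous: if $\ccc^2$ meant the coordinatewise square, then \eqref{eq:a-relations} would include, e.g.\ for $g=2$, the relation $a_{00}^2a_{11}^2=a_{10}^2a_{01}^2$, which is \emph{not} satisfied on the image (it would force $R_{12}=0$). The condition must be read as matching the second moments $\sum_i \ccc_i\ccc_i^T=\sum_i \mathbf d_i\mathbf d_i^T$, i.e.\ precisely the cross-term constraints $\sum m_\ccc c_ic_j=0$ defining your $L$. Second, your inductive sketch for generating $L$ does not work: you propose to find a $2$-vs-$2$ exchange $\ccc+\ccc'=\mathbf d+\mathbf d'$ and ``subtract the corresponding four-term generator,'' but a $2$-vs-$2$ move matching only first moments is \emph{not} in $L$ (the $c_ic_j$ constraints fail, as in the $g=2$ example above), so subtracting it does not keep you inside $L$, and there is no ``corresponding four-term generator.'' The actual generators are $4$-vs-$4$, and your reduction step needs to produce one of those directly. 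The cleanest fix is to drop the Graver heuristics and import the paper's induction: for any $\ccc$ with at least three $1$'s, the displayed quartic above lies in $L$ and strictly lowers the maximal support size, so iterating it reduces any $m\in L$ to a combination supported on $\{\mathbf 0,e_i,e_i+e_j\}$, where $L$ is trivially zero. That closes the gap for all $g$ without any computation.
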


\begin{proof}
An immediate computation shows that the points in the image of $\psi$ verify the relations in \eqref{eq:a-relations} and $a_{{\bf 0}}=1$. To show the converse, we consider a point ${\bf a} = [a_{{\bf c}_1}:a_{{\bf c}_2}:\dots:a_{{\bf c}_{2^g}}]\in \PP^{2^g-1}$, with entries indexed by points in $\mathcal{C}$, such that it verifies the desired equations. Define the matrix $R \in \text{Sym}^2(\CC^g)$ with entries given by 
$$R_{ii}\,=\,2\log a_{e_i}\quad \text{and} \quad R_{ij}\,=\,\log\frac{ a_{e_i+e_j}}{a_{e_i}a_{e_j}} \quad \text{for } i,j \in [g], \,\, i\neq j,$$
where $e_i$ denotes the $i$-th vector in the standard basis of $\ZZ^g$ and $\log$ denotes the natural logarithm. In this way, by definition we have 
$$ a_{e_i} \,=\, \exp\left[ \frac{1}{2} e_i^T R e_i\right]\quad \text{and} \quad a_{e_i+e_j} \,=\, \exp\left[ \frac{1}{2} (e_i+e_j)^T R (e_i+e_j)\right] \quad \text{for } i,j \in [g],\,\, i\neq j.$$ 
Notice that the points in $\mathcal{C}$ are indexed by all possible subsets of the set $[g]=\{1,2,\dots,g\}$, and they are all of the form ${\bf c}_I = \sum_{\{i\in I\}} e_i$, with ${\bf c}_{\emptyset} = {\bf 0}$. Hence, we proceed by induction on the size $n$ of the support $I$. The cases $n=1,2$ have been verified above. Therefore, we assume that the $a_{c_I}$ have the desired form for any $I\subset [g]$ with $|I|\leq n-1$.

It is sufficient to prove the claim for the element $a_{e_1+\dots+e_n}$. By hypothesis, it verifies a relation of the form
\begin{equation}\label{eq:quartic_proof}
a_{e_1+\dots+e_n}\, = \, \frac{a_{e_1+\dots+ e_{n-1}} \cdot a_{{e_1}+\dots+ e_{n-2} + e_n} \cdot a_{e_{n-1}+e_n} \cdot a_{{\bf 0}} }{a_{e_1+\dots+e_{n-2}}\cdot a_{e_{n-1}} \cdot a_{e_n}},
\end{equation}
where all the factors on the right-hand side of the equality are of the prescribed exponential form. Observe that, for ${\bf c}_I$ as above, in general one has
$$	a_{{\bf c}_I} \, = \, \exp \left[ \frac{1}{2} \biggl( \sum_{i\in I} e_i \biggr)^T R \biggl(\sum_{i\in I} e_i\biggr) \right]\, = \,
	\exp \biggl[ \frac{1}{2} \biggl(\sum_{i\in I} R_{ii} + \sum_{\substack{i,j\in I\\i\neq j}} R_{ij}\biggr) \biggr].$$
Substituting such exponentials in the right-hand side of (\ref{eq:quartic_proof}), we obtain that 
$$a_{e_1+\dots+e_n}\, = \, \exp \left[ \frac{1}{2} (e_1+\dots + e_n)^T R (e_1+\dots + e_n) \right].$$
This concludes the proof.
\end{proof}

\begin{thm}\label{thm:directions}
	There are $\binom{g}{d}$ face directions for each dimension $d$, and all faces with the same direction contribute the same quartic, up to a multiple, to the ideal defining $\HM$. 
\end{thm}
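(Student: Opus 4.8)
The plan is to treat the two assertions in turn; the first is purely combinatorial and the second reduces to one computation with the parameterization \eqref{eq:param}. For the count: a $d$-dimensional face of $\hat{\mathcal C}=[0,1]^g$ is cut out by prescribing values in $\{0,1\}$ for $g-d$ of the coordinates, and its direction $D$ is the set of the remaining $d$ free coordinates; so the directions of $d$-faces are exactly the $d$-subsets of $[g]$, giving $\binom{g}{d}$ of them, each supporting $2^{g-d}$ faces. For the second assertion I would first make the quartic attached to a face explicit, using the description of the generators of the defining ideal of $\HC$ recalled before Lemma~\ref{lem:gquadrics} (Corollary~6 of \cite{AFMS}). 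A $d$-face $F$ of direction $D$ whose fixed coordinates are encoded by a vertex $p$ (take $p_i=0$ for $i\in D$) corresponds to the point $\mathbf d_F\in\mathcal C^{[2]}$ with $(\mathbf d_F)_i=1$ for $i\in D$ and $(\mathbf d_F)_i=2p_i$ otherwise; by Proposition~\ref{countattained} it is attained $2^{d-1}$ times, once for each unordered partition $\{S,D\setminus S\}$ of $D$ (the diagonals of $F$), the associated pair of vertices differing by $\sum_{i\in S}e_i-\sum_{i\in D\setminus S}e_i$. Writing $\mathbf c(p;S)$ for the vertex of $F$ whose $D$-coordinates in $S$ equal $1$, this gives
\begin{equation*}
Q_F \;=\; \sum_{\{S,\,D\setminus S\}} P_S^D(\mathbf u,\mathbf v,\mathbf w)\, a_{\mathbf c(p;S)}\,a_{\mathbf c(p;D\setminus S)},\qquad P_S^D:=P\!\left(\sum_{i\in S}u_i-\sum_{i\in D\setminus S}u_i,\ \ldots,\ \ldots\right),
\end{equation*}
with $P(x,y,t)=x^4+3y^2-4xt$. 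Since $P$ is invariant under $(x,y,t)\mapsto(-x,-y,-t)$ and the difference vector is supported on $D$, the coefficient $P_S^D$ depends only on $D$ and $S$, never on $p$: all dependence of $Q_F$ on which face of direction $D$ we picked is carried by the monomials $a_{\mathbf c(p;S)}a_{\mathbf c(p;D\setminus S)}$.

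The heart of the argument is to show that on $\HM$ these monomials are, for a fixed direction $D$, pairwise proportional across faces, with a ratio that is the same for all $S$. Given two faces $F,F'$ of direction $D$ with base vertices $p,p'$, I would substitute \eqref{eq:param}, writing $a_{\mathbf c}=\bigl(\prod_{k<l,\,k,l\in I(\mathbf c)}(\kappa_k-\kappa_l)\bigr)\prod_{m:c_m=1}\lambda_m$ with $I(\mathbf c)=\{2m:c_m=0\}\cup\{2m-1:c_m=1\}$, and note the decomposition $I(\mathbf c(p;S))=I_{\mathrm{out}}(p)\sqcup I_S^D$ where $I_{\mathrm{out}}(p)$ depends only on $p|_{[g]\setminus D}$ and $I_S^D=\{2m-1:m\in S\}\cup\{2m:m\in D\setminus S\}$. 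Forming $a_{\mathbf c(p;S)}a_{\mathbf c(p;D\setminus S)}$ and sorting the Vandermonde factors by whether the two indices both lie in $I_{\mathrm{out}}(p)$, straddle it, or both lie in $\{2m-1,2m:m\in D\}$, the first group contributes $\bigl(\prod_{k<l\in I_{\mathrm{out}}(p)}(\kappa_k-\kappa_l)\bigr)^2$, the straddling group telescopes (using $I_S^D\sqcup I_{D\setminus S}^D=\{2m-1,2m:m\in D\}$) to $\pm\prod_{k\in I_{\mathrm{out}}(p)}\prod_{m\in D}(\kappa_k-\kappa_{2m-1})(\kappa_k-\kappa_{2m})$, and the $\lambda$-part equals $\bigl(\prod_{m\notin D,\,p_m=1}\lambda_m\bigr)^2\prod_{m\in D}\lambda_m$, none of which involves $S$; the last group contributes $\prod_{k<l\in I_S^D}(\kappa_k-\kappa_l)\prod_{k<l\in I_{D\setminus S}^D}(\kappa_k-\kappa_l)$, which depends on $S$ and $D$ but not on $p$. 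Hence $a_{\mathbf c(p;S)}a_{\mathbf c(p;D\setminus S)}\big/a_{\mathbf c(p';S)}a_{\mathbf c(p';D\setminus S)}$ is independent of $S$; equivalently the quartic binomials
\begin{equation*}
a_{\mathbf c(p;S)}a_{\mathbf c(p;D\setminus S)}\,a_{\mathbf c(p';T)}a_{\mathbf c(p';D\setminus T)} \;=\; a_{\mathbf c(p;T)}a_{\mathbf c(p;D\setminus T)}\,a_{\mathbf c(p';S)}a_{\mathbf c(p';D\setminus S)},
\end{equation*}
which are instances of \eqref{eq:a-relations} (one checks $\sum_i\mathbf c_i=\sum_i\mathbf d_i$ and $\sum_i\mathbf c_i^2=\sum_i\mathbf d_i^2$), hold on $\HM$. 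Modulo these relations $a_{\mathbf c(p';S)}a_{\mathbf c(p';D\setminus S)}=\mu_{F,F'}\,a_{\mathbf c(p;S)}a_{\mathbf c(p;D\setminus S)}$ for a single Laurent monomial $\mu_{F,F'}$ in the $a$-coordinates (the same for all $S$), so $Q_{F'}\equiv\mu_{F,F'}\,Q_F$: all faces of direction $D$ contribute the same quartic, up to a multiple. For $d=1$ the $a$-monomials are absent, $\mu_{F,F'}=1$, and one recovers Lemma~\ref{lem:gquadrics}; for $g=3$, $d=3$ the relation produced this way is exactly \eqref{eq:THEquartic}.

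I expect the main obstacle to be the bookkeeping in the second paragraph: checking that the Vandermonde product really does split into an $S$-independent part and a part depending on $(S,D)$ alone, in particular controlling the signs introduced by the ordering $k<l$ when indices of $I_{\mathrm{out}}(p)$ interleave those of $I_S^D$; and pinning down the exact sense in which $Q_{F'}$ and $\mu_{F,F'}Q_F$ are "the same", namely congruent modulo the ideal generated by the binomial relations \eqref{eq:a-relations} above (this is the real content, since $Q_F$ and $Q_{F'}$ themselves already lie in the defining ideal of $\HC\subseteq\HM$, hence are both honest elements of the defining ideal of $\HM$). One could avoid the hands-on computation by instead deducing those binomial relations from the Lemma on the map $\psi$, after using the torus action to normalize $a_{\mathbf 0}=1$ so that the $a_{\mathbf c}$ on $\HM$ acquire the exponential-of-quadratic-form shape \eqref{eq:embeddingR}.
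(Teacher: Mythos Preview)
Your proposal is correct, and it follows a genuinely different route from the paper's own argument. The first combinatorial assertion and the observation that the coefficients $P_{k\ell}$ depend only on the direction are handled the same way in both. The divergence is in how one shows that the monomials $a_{\mathbf c(p;S)}a_{\mathbf c(p;D\setminus S)}$ are proportional across faces of a fixed direction with an $S$-independent ratio. You do this by plugging in the defining parameterization \eqref{eq:param} of $\HM$ and splitting the Vandermonde product into inside/straddle/outside pieces; the paper instead works with the exponential form $a_{\mathbf c}=\exp[\tfrac{1}{2}\mathbf c^{T}R\,\mathbf c]$ from \eqref{eq:degenerateTheta2} and reduces the claim to a short identity for $\sum_{i,j\in I}R_{ij}$ over suitable index sets, using only that $R$ is symmetric. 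Your approach has the advantage of appealing directly to the definition of $\HM$ as the closure of the image of $\phi$, whereas the paper's exponential computation is shorter but tacitly uses that the parameterized $a_{\mathbf c}$ satisfy the quartic relations \eqref{eq:a-relations} (equivalently, lie in the closure of the image of $\psi$)---precisely the alternative you sketch in your final sentence. Incidentally, your worry about signs in the straddling block is unfounded: since $I_S^D\sqcup I_{D\setminus S}^D$ is the fixed set $\{2m-1,2m:m\in D\}$, the combined straddling product over the two factors $a_{\mathbf c(p;S)}$ and $a_{\mathbf c(p;D\setminus S)}$ is already independent of $S$, signs included.
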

\begin{proof}
Consider two $d$-dimensional faces of the $g$-cube with the same direction. This means that their corresponding points $\mathbf{c}_1, \mathbf{c}_2 \in \mathcal{C}^{[2]}$ have $1$s in exactly the same positions. Both points have $2^{d-1}$ pairs which sum to them, and these pairs can be put in a correspondence. Namely, for a pair $\ccc_{k}, \ccc_{\ell}$ that sums to $\ccc_1$, the pair $\tilde{\ccc}_{k}, \tilde{\ccc}_\ell$ is a pair that sums to $\ccc_2$, where $\tilde{\mathbf{a}}$ is obtained from $\mathbf{a}$ by changing the entry from $0$ to $1$ (or vice-versa) for every index in the difference of the two faces. For the two pairs, $(\ccc_k, \ccc_\ell)$ and $(\tilde{\ccc}_k, \tilde{\ccc}_\ell)$, their corresponding quartic $P_{kl}$ is the same, since it is easy to see that $\ccc_k - \ccc_\ell = \tilde{\ccc}_k - \tilde{\ccc}_\ell$.

Recall from Section \ref{sec:combinatorics}, that the generators of the ideal $\mathcal{I}(\mathcal{H}_{\mathcal{C}})$ corresponding to $d$-dimensional faces with $d>1$ are of the form  
\begin{equation}
\label{eq:nonunique}
\sum_{\genfrac{}{}{0pt}{}{1 \leq k < \ell \leq m}{
		{\bf c}_k + {\bf c}_\ell \,=\, {\bf d}}} \!\! P_{k\ell} ({\bf u},{\bf v},{\bf w}) \, a_k a_\ell,
\end{equation}
where ${\bf d} \in \mathcal{C}^{[2]}$ is not uniquely attained. In what follows, we show that, when we restrict to the main component $\HM$, a $d$-dimensional face with direction $D$ contributes the same quartic, up to a multiple, as the face corresponding to the point $\ccc_D = \sum_{\{i \in D\}} e_i$. We have already shown that the $P_{k\ell}$ are the same for faces with the same direction. Thus, it is sufficient to show that the polynomial $$a_{\ccc_k}a_{\ccc_\ell} - d \cdot a_{\tilde{\ccc}_k}a_{\tilde{\ccc}_\ell}$$ 
is in the ideal defining $\HM$,  where $d$ is a factor (in fact, a product of some $a_{\ccc}$'s) which does not depend on $k, \ell$, instead it depends only on the difference and direction of the two faces. 

For ease of computations, we fix a direction $D$ and we will take one of the faces ($F_1$) to be the face corresponding to the point $\ccc_1 = \sum_{\{i \in D\}} e_i$. The other face ($F_2$) is a face with direction $D$ and difference $E$ from $F_1$. Hence, the point corresponding to $F_2$ is $\ccc_2 = \sum_{\{i \in D\}} e_i + \sum_{\{i \in E\}} 2e_i$. Since we will show that the quartic contributed by $F_2$ is the same as the one contributed by $F_1$ up to a multiple, this will show that all faces with the same direction contribute essentially the same polynomial to the ideal defining $\HM$.

Recall that the $a_{\ccc}$ are given by $\exp[\frac{1}{2} \ccc R\ccc^T]$, where we write $R$ for $R_0$ from (\ref{eq:degenerateTheta2}). Consider a pair $\ccc_k, \ccc_\ell \in \mathcal{C}$ such that $\ccc_1 = \ccc_k + \ccc_\ell$. Then, there exist two disjoint subsets $D_1,D_2\subseteq D$ such that $D = D_1 \cup D_2$, and $\ccc_k = \sum_{\{i \in D_1\}} { e}_i$ and $\ccc_\ell = \sum_{\{i \in D_2\}} {e}_i$. It follows that $\tilde{\ccc}_k = \sum_{\{i \in D_1 \cup E\}} {e}_i$ and $\tilde{\ccc}_\ell = \sum_{\{i \in D_2 \cup E\}} {e}_i$.

We will now use the linear algebra fact that for a symmetric $g\times g$ symmetric matrix, the following hold 
$$\left(\,\sum_{i \in I} {e}_i\right) R\, \biggl(\,\sum_{j \in J}{e}_j\biggr)^{T} = \sum_{i \in I, j \in J}R_{ij}$$
Therefore, we have 
\begin{equation*}
a_{\tilde{\ccc}_k}a_{\tilde{\ccc}_\ell} \,\,=\,\, \exp\biggl[\, \frac{1}{2}\sum_{i, j \in D_1 \cup E} R_{ij}\biggr]\exp\biggl[\,\frac{1}{2}\sum_{i, j \in D_2 \cup E} R_{ij}\biggr]
\,\, =\,\, \exp\biggl[\,\frac{1}{2}\sum_{i, j \in D \cup E} R_{ij}\biggr]\exp\biggl[\,\frac{1}{2}\sum_{i, j \in E} R_{ij}\biggr],
\end{equation*}
 which one can easily see is a multiple of $\exp[\,\frac{1}{2}\sum_{\{i, j \in D\}} R_{ij}] = a_{{\ccc}_k}a_{{\ccc}_\ell}$ which only depends on the sets $D$ and $E$, as desired.
\end{proof}

One can observe that Theorem \ref{thm:directions} is a generalization of Lemma \ref{lem:gquadrics} to equations arising from points in $\mathcal{C}^{[2]}$ which correspond to higher dimensional faces. This holds for points 
in the main component $\HM$. Moreover, Theorem \ref{thm:directions} reduces the number of potentially non-redundant relations holding in the ideal defining the variety $\HM$ to $2^g-1$. This is also the codimension of $\HM$ inside its ambient space $\KWP$. Our code in \eqref{eq:mathrepo} verifies that this set of generators defines a variety which is a complete intersection, of which $\HM$ is a component for $g\leq 9$.

\subsection{Quartic Relations and The Schottky Locus}
We conclude with a discussion relating the quartic relations among the $a_i$ parameters and the Schottky problem. The \textit{classical Schottky problem}, studied by Riemann and Schottky, requires one to write down the defining equations for the \textit{Schottky locus} $\Ig$, namely the subset of abelian varieties corresponding to Jacobians of curves. In fact, the \textit{second order theta constants} \cite{vangeemen1998constants} define an embedding of $\mathcal{A}_g$ into a projective space 
$\mu: \mathcal{A}_g \hookrightarrow \PP^{2^g-1}$ and to solve the Schottky problem one aims to determine the defining ideal of $\mu(\Ig)\subset \mu(\mathcal{A}_g)$. Hence, the Schottky problem concerns the maps 
\begin{equation}\label{eq:mapsSchottky}
\mcM_g \,\,\xrightarrow{J}\,\ \mcA_g\,\, \hookrightarrow \,\,\PP^{2^g-1}.
\end{equation}
Moreover, the space $\mcA_g$ is parameterized by the \textit{Siegel upper-half space} $\mathfrak{H}_g$, namely the set of complex symmetric $g \times g$ matrices with positive definite imaginary part. As explained in the introduction, in genus $3$, a dimension count shows that in this case the Schottky problem is trivial. In genus $4$, the ideal  $\mathfrak{I}_4$ is an analytic hypersurface in $\mathfrak{H}_4$. Furthermore, in genus $3$ the second order theta constants verify an equation of degree $16$ which leads to the equation characterizing Jacobians of curves in genus $4$, i.e., Igusa's equation, see \cite[Example 6.2]{vangeemen2016kummer}, \cite{igusa1982irreducibility}.   
An analogous situation can be described when looking at the degenerate theta functions arising from irreducible rational nodal curves. 
The map $\psi$ in \eqref{eq:embeddingR} provides an embedding of the space $\text{Sym}^2(\CC^g)$ in the projective space $\PP^{2^g-1}$.
The dimension count for these spaces is analogous to the one for the spaces involved in \eqref{eq:mapsSchottky}. In particular, for genus $3$, we find that $\overline{im(\psi)} = \overline{\text{Sym}^2(\CC^3)}$ inside $\PP^7$. Note that the image of the map $\psi$ is contained in the locus $V(a_{111}a_{100}a_{010}a_{001} - a_{110}a_{101}a_{011}a_{000})$, where the relation among the $a_i$'s comes from \eqref{eq:THEquartic}. Equality then follows by a direct computation since these are both irreducible varieties of equal dimension. We believe a similar situation should hold for higher genera.
We aim to pursue this direction as a future project.

\section*{Acknowledgments}
We are deeply grateful to Daniele Agostini and Bernd Sturmfels for helpful discussions and support during this project, in addition to feedback about results and the manuscript. We also thank Lara Bossinger, T\"urk\"u \"Ozl\"um \c{C}elik, Marvin Hahn, and Fatemeh Mohammadi for helpful discussions in the starting stages of the project. Finally, we thank the anonymous referees for making suggestions that improved the paper and for pointing us to interesting references. Yelena Mandelshtam was supported by NSF grant DGE 2146752.

\bibliographystyle{elsarticle-harv}

 \end{document}